\documentclass[11pt]{amsart}
\usepackage[T1]{fontenc}
\usepackage[utf8]{inputenc}
\usepackage{lmodern}
\usepackage{amsmath,amssymb,amsthm,amsfonts}
\usepackage{mathtools, mathrsfs, verbatim}
\usepackage[left=2.3 cm,right=2.3cm,top=1.9cm,bottom=2.4cm]{geometry}
\usepackage[numbers,sort&compress]{natbib}
\usepackage{hyperref}
\newtheorem{theorem}{Theorem}
\newtheorem{lemma}[theorem]{Lemma}
\newtheorem{corollary}[theorem]{Corollary}

\newtheorem{proposition}[theorem]{Proposition}
\newtheorem{definition}{Definition}

\usepackage[p,osf]{cochineal}
\usepackage[scale=.95,type1]{cabin}
\usepackage[zerostyle=c,scaled=.94]{newtxtt}

\usepackage{color, xcolor}
\definecolor{LinkBlue}{RGB}{45,109,79}
\definecolor{CiteGreen}{RGB}{217,74,42}
\definecolor{UrlPurple}{RGB}{120,0,160}
\PassOptionsToPackage{
  unicode,
  colorlinks=true,
  linkcolor=LinkBlue,
  citecolor=CiteGreen,
  urlcolor=UrlPurple,
  pdfborder={0 0 0}
}{hyperref}
\usepackage{ifpdf}
\ifpdf
    \usepackage[pdftex]{graphicx}
    \DeclareGraphicsExtensions{.png,.pdf,.jpg}
\else
   \usepackage[dvips]{graphicx}
   \DeclareGraphicsExtensions{.eps}
\fi
\graphicspath{{.}{figures/}}
\numberwithin{equation}{section}

\usepackage{doi}
\hypersetup{
  colorlinks=true,
  linkcolor=LinkBlue,
  citecolor=CiteGreen,
  urlcolor=UrlPurple,
  pdfborder={0 0 0}
}
\newcommand{\Ber}{\mathrm{Ber}}
\newcommand{\Pois}{\mathrm{Poi}}
\newcommand{\C}{\mathbb{C}}
\newcommand{\E}{\mathbb{E}}
\newcommand{\Prob}{\mathbb{P}}
\newcommand{\dd}{\,\mathrm{d}}

\newcommand{\tauop}{\tau}
\newcommand{\M}{\mathscr{M}}
\newcommand{\Id}{\mathbf{1}}
\DeclareMathOperator{\diag}{diag}

\newcommand{\yz}[1]{{\color{purple} [Yizhe: #1]}}

\title{The spectral edge of sparse directed Erd\H{o}s--R\'enyi graphs}
\author{Simon Coste}
\address{Laboratoire de Probabilit\'es, Statistique et Mod\'elisation, Universit\'e Paris Cit\'e, Paris, France}
\email{simon.coste@u-paris.fr}
\author{Yizhe Zhu}
\address{Department of Mathematics, University of Southern California}
\email{yizhezhu@usc.edu}
\date{\today}

\begin{document}

\begin{abstract}
Let $d>1$ be fixed and let $A_n$ be an $n\times n$ matrix with independent
$\Ber(d/n)$ entries. For every $0<r<\sqrt d$, we prove that, with high
probability, a positive proportion of the eigenvalues of $A_n$ have modulus
larger than $r$. 
Together with the known upper bound, this implies that the modulus of
the second largest eigenvalue converges in probability to $\sqrt d$.

Our proof works with the Brown measure $\mu_d$ of the adjacency operator of the directed Poisson--Galton--Watson tree. The convergence theorem of Sah, Sahasrabudhe, and Sawhney \cite{sah2023limiting}, together with the Brown-measure identification following it, gives weak convergence of the empirical spectral measure of $A_n$ to $\mu_d$ in probability.
We prove that the outer radius of its support is $\sqrt d$,  using a resolvent recursion on Poisson--Galton--Watson trees.
\end{abstract}

\maketitle

{\footnotesize
\tableofcontents
}

\begin{figure}[h!]
  \centering
  \includegraphics[width=0.49\textwidth]{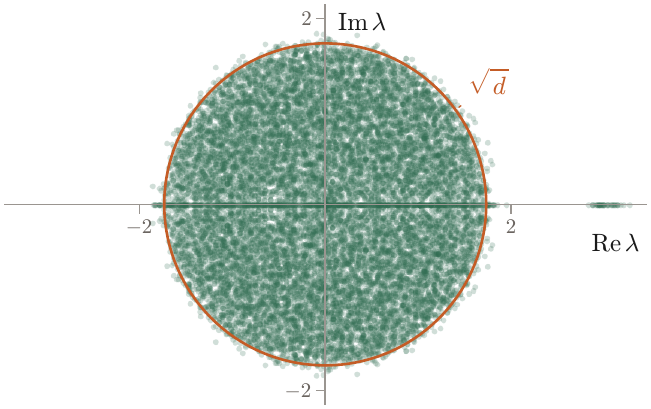}\hfill
  \includegraphics[width=0.49\textwidth]{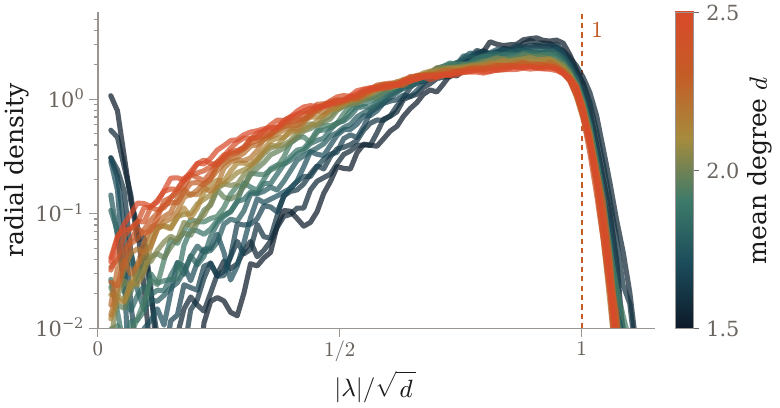}
  \caption{Left: an overlay of the spectra of $N=40$ independent directed Erd\H{o}s--R\'enyi adjacency matrices with $n=500$ vertices and degree $d=3$. The outliers visible near $d$ are the Perron eigenvalues.
  Right: empirical radial densities of $|\lambda|/\sqrt{d}$ for various values of $d\in[1.5,2.5]$, each pooled over $N=50$ independent matrices of size $n=1000$ (Perron eigenvalue and atom at $0$ were removed).  }
  \label{fig:spectral-edge}
\end{figure}

\section{Introduction}

\subsection{Context and main result}

Let $A_n$ be the adjacency matrix of a directed Erd\H{o}s-R\'enyi graph with mean degree $d>1$: all the $n^2$ entries of $A_n$ are independent Bernoulli random variables with mean $d/n$. We note $\lambda_i(A_n)$ its (complex) eigenvalues, ordered by decreasing modulus, and we write
\[
\mu_{A_n}=\frac1n\sum_{j=1}^n\delta_{\lambda_j(A_n)}
\]
for the empirical spectral measure. The main result of \cite{sah2023limiting} is that there is a deterministic probability measure $\mu_d$ on $\C$ such that $\mu_{A_n}$ converges weakly to $\mu_d$ in probability. The same paper identifies $\mu_d$ as the Brown measure of the adjacency operator of the directed Poisson--Galton--Watson tree. It was proved in \cite[Theorem~2.3 and Section~2.2]{bordenave2023detection}
and \cite[Theorem~2.6]{coste2023sparse} that $\lambda_1(A_n)$ converges in
probability to $d$, and that, for every $\varepsilon>0$,
\[
 \Prob\bigl(|\lambda_2(A_n)|>\sqrt d+\varepsilon\bigr)\longrightarrow0.
\]
Consequently,
\begin{equation}\label{eq:outer-containment}
 \operatorname{supp}\mu_d\subseteq\overline D(0,\sqrt d),
\end{equation}
where $\overline D(0,r):=\{z\in\C:|z|\le r\}$ is the closed disk.
It was conjectured in the literature that this upper bound on $|\lambda_2(A_n)|$ is actually optimal, see for example \cite[Section~4]{coste2023sparse} for a discussion. In this paper, we positively answer the question by proving the matching lower bound. 

\begin{theorem}[Outer radius]\label{thm:main}
Let $d>1$, and let $\mu_d$ be the deterministic limiting empirical
spectral measure of matrices with independent $\Ber(d/n)$ entries.
For every $0<r<\sqrt d$,
\begin{equation}\label{eq:main-positive-tail}
 \mu_d\bigl(\{z\in\C:|z|>r\}\bigr)>0.
\end{equation}
In particular, the outer radius of its support is
\begin{equation}\label{eq:outer-radius}
 \max\{|z|:z\in\operatorname{supp}\mu_d\}=\sqrt d.
\end{equation}
\end{theorem}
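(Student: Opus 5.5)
We will work with the Brown measure description of $\mu_d$. Let $T$ be the adjacency operator of the directed Poisson($d$)--Galton--Watson tree rooted at $o$. For $z\in\C$ and $\eta>0$, the hermitized resolvent is
\[
R(z,\eta)=\begin{pmatrix} i\eta & T-z\\ (T-z)^* & i\eta\end{pmatrix}^{-1}.
\]
The Brown measure is $\mu_d=\frac1{2\pi}\Delta_z\,\E\log|\det(T-z)|$. In resolvent form, $\frac{1}{\pi}\partial_{\bar z}\E[R(z,\eta)_{21,oo}]$ tends to $\mu_d$ as $\eta\downarrow0$, in the sense of distributions. Write $a(z)=\lim_{\eta\downarrow 0}\E[R_{oo}]$ for the relevant off-diagonal block.

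The key point is that $\mu_d(\{|z|>r\})=0$ would make $\log$-potential harmonic there. Concretely, write $f(z)=\E\log|\det(T-z)|$, understood as the Fuglede--Kadison determinant. Harmonicity of $f$ on $\{r<|z|\}$ would force, by rotation invariance of the tree law in $z$ (via the phase symmetry $T\mapsto e^{i\theta}T$ on a tree), the identity $f(z)=c+m\log|z|$ on that annulus. Here $m=\mu_d(\overline D(0,r))$ is a constant. For $|z|$ large, $f(z)=\log|z|$ exactly, and $f$ is continuous with $\mu_d$ of total mass $1$. Hence $f(z)=\log|z|$ on all of $\{|z|>r\}$, i.e.\ the root resolvent would be trivial, with $\E[R_{21,oo}]=-1/\bar z$ for $|z|>r$.

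So the plan is to exhibit some $z$ with $r<|z|<\sqrt d$ where the hermitized resolvent at the root is nontrivial. We use the recursive distributional equation on the PGW tree. Let $x=\Im$ of the diagonal entries $a_o(\eta)$ in the limit $\eta\to0$. Then $a_o$ solves
\[
a_o=\frac{S_o}{|z|^2+S_oS_o'}\quad\text{(schematically)},\qquad S_o=\sum_{j=1}^{N}a_j,\quad S'_o=\sum_{j=1}^{N'} a'_j,
\]
with $N,N'$ independent Poisson($d$) offspring counts for out- and in-neighbours and i.i.d.\ copies. The trivial solution $a\equiv0$ corresponds to $f=\log|z|$. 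We would show that for $|z|^2<d$ the trivial fixed point is unstable and $\eta>0$ selects a nontrivial one. Linearizing at $0$ gives $a_o\approx |z|^{-2}\eta+|z|^{-2}S_o$, so $\E a\approx|z|^{-2}(\eta+d\,\E a)$. This is stable exactly when $d/|z|^2<1$. For $|z|^2<d$ we argue by contradiction: if $\E a_o(\eta)\to0$, then a bound $a_o\ge \eta/(|z|^2+S S')$ iterated down a Galton--Watson branching structure gives $\E a_o(\eta)\gtrsim \eta\sum_k (d/|z|^2)^k$ truncated at depth $\sim\log(1/\eta)$. This forces $\E a_o(\eta)\ge c\,\eta^{\,1-\delta}$, and a second-moment/concavity argument bootstraps it to a lower bound independent of $\eta$, since $x\mapsto x/(|z|^2+x y)$ saturates. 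This step is the main obstacle: a lower bound uniform in $\eta$ on the root diagonal resolvent in the supercritical regime $d>|z|^2$ must control the denominator $S_oS_o'$, which can be large. A natural remedy is to restrict to the event that the explored subtree has bounded degrees. This is a positive-probability supercritical branching process whose growth rate $d'$ still exceeds $|z|^2$ by taking $|z|$ close to $r$, and a monotonicity of the recursion lets us compare with it.

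Given a uniform lower bound $\liminf_{\eta\to0}\E a_o(\eta)>0$ at some (hence, by rotation invariance, every) $z$ with $|z|\in(r,\sqrt d)$, we obtain $\E[R_{21,oo}]\neq-1/\bar z$ there. This contradicts the harmonicity conclusion, so $\mu_d(\{|z|>r\})>0$. Combined with \eqref{eq:outer-containment}, this yields \eqref{eq:outer-radius}.
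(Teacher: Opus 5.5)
Your reduction is right in outline: if $\mu_d(\{|z|>r\})=0$, then radiality and the known containment $\operatorname{supp}\mu_d\subseteq\overline D(0,\sqrt d)$ force $L(z)=\log|z|$ for $|z|>r$. But there is a genuine gap at exactly the step you flag, and the quantity you aim to bound is the wrong one.

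\textbf{Wrong mode of convergence.} Via $F_\eta(s)=\E[s^2/(s^2+U_\eta V_\eta)]\to1$ for $s>r$ and the independence of the i.i.d.\ sums $U_\eta,V_\eta$, the contradiction hypothesis only gives that $X_\eta\to0$ \emph{in probability}. A bound $\liminf_{\eta}\E X_\eta>0$ does not contradict this. A priori $X_\eta$ is only bounded by $\eta^{-1}$, and nothing in your argument gives uniform integrability. So your final step, from a lower bound on $\E a_o$ to $\E R_{21,oo}\neq-1/\bar z$, is unjustified. Likewise, arguing by contradiction from $\E a_o(\eta)\to0$ assumes more than the hypothesis provides.

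\textbf{The lower bound is not established.} Iterating $a_o\ge\eta/(|z|^2+SS')$ over $\log(1/\eta)$ generations loses control exactly when the messages reach order one. Your proposed remedy relies on a monotonicity the recursion does not have. Indeed, $X_\eta=V_\eta/(r^2+U_\eta V_\eta)$ increases in the incoming sum but decreases in the outgoing sum $U_\eta$. The summands of $U_\eta$ are a priori only bounded by $\eta^{-1}$, even on a bounded-degree subtree, so pruning degrees does not control the denominator.

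\textbf{The missing idea.} Let the collapse hypothesis control the denominator for you. If $X_\eta\to0$ in probability, a compound-Poisson tail bound gives $U_\eta\to0$ in probability. Hence $\Prob(U_\eta\le1-\alpha r^2)\to1$ for a fixed $\alpha\in(d^{-1},\min\{1,r^{-2}\})$, and on that event $X_\eta\ge\min\{1,\alpha V_\eta\}$. Now work with the truncated mean $u_\eta=\E\min\{1,X_\eta\}$, which tends to $0$ exactly when $X_\eta\to0$ in probability. The inequality $1-\prod_j(1-t_j)\le\min\{1,\sum_j t_j\}$ and the independence of $U_\eta$ from the incoming messages give $u_\eta\ge\Prob(U_\eta\le1-\alpha r^2)\bigl(1-e^{-d\alpha u_\eta}\bigr)$. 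Letting $u_\eta\to0$ yields $1\ge d\alpha>1$, a contradiction.

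This single-generation estimate replaces your depth iteration, degree truncation and bootstrap entirely. The truncation $\min\{1,\cdot\}$ is what makes the conclusion match the in-probability form of the edge criterion, $F(s)=1\iff X_\eta\to0$ in probability.
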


In view of \eqref{eq:outer-containment}, it suffices to prove that for almost every $r\in(0,\sqrt d)$:
\begin{equation}\label{eq:target-tail}
\mu_d\bigl(\{z\in\C:|z|>r\}\bigr)>0.
\end{equation}
By monotonicity, this then holds for every $r\in(0,\sqrt{d})$.
Fix $0<r<s<\sqrt{d}$, and choose a smooth function
$\varphi:\C\to[0,1]$ that vanishes on $\{|z|\le r\}$ and equals
$1$ on $\{|z|\ge s\}$. Then
\[
c:=\int\varphi\,\dd\mu_d
\ge \mu_d\bigl(\{|z|\ge s\}\bigr)>0.
\]
As a corollary, we show that the modulus of
the second largest eigenvalue of $A_n$ converges in probability to $\sqrt d$. In fact, we can obtain a slightly stronger corollary:
\begin{corollary}[Spectral edge] We have
    \begin{equation}\label{eq:fixed-index-edge}
 |\lambda_j(A_n)|\xrightarrow[n\to\infty]{\Prob}\sqrt d
 \qquad\text{for every fixed }j\ge2.
\end{equation}
\end{corollary}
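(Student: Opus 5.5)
The plan is to combine Theorem~\ref{thm:main} with the weak convergence of $\mu_{A_n}$ to $\mu_d$ in probability \cite{sah2023limiting}, which gives the lower bound, and with the known results on $\lambda_1$ and $\lambda_2$ \cite{bordenave2023detection,coste2023sparse}, which give the upper bound. Fix $j\ge2$ and $\varepsilon\in(0,\sqrt d)$. Since $|\lambda_j(A_n)|\le|\lambda_2(A_n)|$, the upper bound
\[
\Prob\bigl(|\lambda_j(A_n)|>\sqrt d+\varepsilon\bigr)\le\Prob\bigl(|\lambda_2(A_n)|>\sqrt d+\varepsilon\bigr)\longrightarrow0
\]
is immediate. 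It therefore remains to prove $\Prob\bigl(|\lambda_j(A_n)|<\sqrt d-\varepsilon\bigr)\to0$.

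For the lower bound, set $r=\sqrt d-\varepsilon$ and $s=\sqrt d-\varepsilon/2$. Take the smooth cutoff $\varphi:\C\to[0,1]$ described after Theorem~\ref{thm:main}: it vanishes on $\{|z|\le r\}$ and equals $1$ on $\{|z|\ge s\}$. By Theorem~\ref{thm:main}, $c:=\int\varphi\dd\mu_d\ge\mu_d(\{|z|\ge s\})>0$. The function $\varphi$ is bounded and continuous, so weak convergence in probability gives $\int\varphi\dd\mu_{A_n}\to c$ in probability. Hence, with probability tending to one, $\int\varphi\dd\mu_{A_n}\ge c/2$. Since $\varphi\le\Id_{\{|z|>r\}}$, this yields
\[
\#\{i:|\lambda_i(A_n)|>r\}\ge cn/2
\]
on that event. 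For $n$ large, $cn/2\ge j$, so at least $j$ eigenvalues have modulus larger than $r$. As the eigenvalues are ordered by decreasing modulus, this means $|\lambda_j(A_n)|>r=\sqrt d-\varepsilon$ with probability tending to one.

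Combining the two bounds gives \eqref{eq:fixed-index-edge}. The only subtle step is the choice of test function. The indicator of $\{|z|>r\}$ is not continuous, so weak convergence cannot be applied to it directly; the smooth minorant $\varphi$ solves this. The argument in fact gives more than stated: it shows that the convergence holds for every $j\le\delta n$ for a small $\delta>0$ depending on $\varepsilon$. Note also that the Perron eigenvalue $\lambda_1\approx d$ plays no role, since it is only one eigenvalue among the linearly many that are produced.
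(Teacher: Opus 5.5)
Your proof is correct and is essentially the paper's own argument. The paper also uses the smooth cutoff $\varphi$, Theorem~\ref{thm:main} and weak convergence in probability to obtain linearly many eigenvalues of modulus greater than $\sqrt d-\varepsilon$, and it uses $|\lambda_j(A_n)|\le|\lambda_2(A_n)|$ together with the cited bound on $|\lambda_2(A_n)|$ for the other side.

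One small caveat concerns your closing remark. Because $\delta$ depends on $\varepsilon$, the argument shows that $|\lambda_j(A_n)|>\sqrt d-\varepsilon$ for all $2\le j\le\delta(\varepsilon)n$ with probability tending to one. This gives convergence for any $j=j_n=o(n)$, but not convergence for all $j\le\delta n$ with a single fixed $\delta$.
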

\begin{proof}
Since $\mu_{A_n}$ converges weakly to $\mu_d$ in probability (see Lemma~\ref{lem:limit-is-brown}), for the function $\varphi$ chosen above,
\[
\Prob\left(\int\varphi\,\dd\mu_{A_n}\ge c/2\right)\longrightarrow 1.
\]
On this event,
\[
\frac1n\#\{j:|\lambda_j(A_n)|>r\}
\ge \int\varphi\,\dd\mu_{A_n}
\ge \frac{c}{2}.
\]
Thus, for every $\varepsilon>0$, there is a positive constant
depending only on $d$ and $\varepsilon$ such that, with probability
tending to one, at least this fraction of the eigenvalues of $A_n$
have modulus greater than $\sqrt d-\varepsilon$. Our proof does not
give a precise estimate for this fraction.

For every fixed integer $j\ge2$, the same argument gives
$|\lambda_j(A_n)|\ge\sqrt d-\varepsilon$ with probability tending to
one. Combining this with
$|\lambda_j(A_n)|\le|\lambda_2(A_n)|$ and the cited upper bound finishes the proof.
\end{proof}

\paragraph{What the theorem says about the support.}
Since $\mu_d$ is rotationally invariant, \eqref{eq:outer-radius} and
closedness of the support imply that the entire circle
$\{z:|z|=\sqrt d\}$ belongs to the support. The origin also belongs to
the support. More precisely,
\begin{equation}\label{eq:zero-atom-lower-bound}
 \mu_d(\{0\})\ge e^{-d}.
\end{equation}
To see this, let $Z_n$ count the zero rows of $A_n$. The row indicators
are independent, each with mean $(1-d/n)^n$, so
$Z_n/n\to e^{-d}$ in probability. The rank is at most $n-Z_n$; hence
the geometric, and therefore the algebraic, multiplicity of the eigenvalue
zero is at least $Z_n$. Thus $\mu_{A_n}(\{0\})\ge Z_n/n$.
Along a subsequence on which both convergences hold almost surely,
the Portmanteau theorem for the closed set $\{0\}$ gives
$\mu_d(\{0\})\ge\limsup_n\mu_{A_n}(\{0\})\ge e^{-d}$.
We have therefore established the support inclusions
\begin{equation}\label{eq:support-inclusions}
 \{0\}\cup\{z:|z|=\sqrt d\}
 \subseteq\operatorname{supp}\mu_d
 \subseteq\overline D(0,\sqrt d).
\end{equation}

These inclusions leave open whether every interior radius occurs in the
support. Full-disk support would require
\begin{equation}\label{eq:no-annular-gaps}
 \mu_d\bigl(\{z:r_1<|z|<r_2\}\bigr)>0
 \qquad(0\le r_1<r_2<\sqrt d).
\end{equation}
This is stronger than \eqref{eq:main-positive-tail}. For example,
a mixture of an atom at zero and the uniform probability measure on the circle of radius $\sqrt d$, with both weights positive, is rotationally invariant and has positive mass outside every smaller disk, but no mass in an interior annulus. Our proof determines the outer radius and does
not establish \eqref{eq:no-annular-gaps}.

\subsection{Related work}

An important result in spectral graph theory is the
\emph{Alon--Boppana bound} \cite{nilli1991second}: for fixed integer
$d\ge3$, every sequence of finite undirected $d$-regular graphs whose
orders tend to infinity satisfies
$\liminf\lambda_2\ge2\sqrt{d-1}$. This lower bound is deterministic.
The matching upper bound for uniformly random regular graphs is the
\emph{Alon--Friedman theorem}; see
\cite{charles2020new,chen2026new,huang2024ramanujan,friedman2008proof,bordenave2019eigenvalues}.
Its proofs are considerably more involved than the elementary lower bound.

When it comes to directed graphs and non-Hermitian matrices, the situation is different. Some tools were developed over the years to prove \emph{upper} bounds for the second eigenvalue; see \cite{coste2021spectral,brito2022spectral,bordenave2022convergence,coste2023sparse,dumitriu2024extreme,coste2024characteristic,belinschi2021outlier}; but proving \emph{lower} bounds for the second eigenvalue of non-Hermitian operators is more difficult. This seems to be due to the lack of min-max characterizations of eigenvalues, and to the very loose relations between singular values and eigenvalues. 

For fixed-degree random regular digraphs, the upper bound
$|\lambda_2|\le\sqrt d+o_{\Prob}(1)$ was proved in
\cite{coste2021spectral,coste2024characteristic}. The recent work
\cite[Theorem~1.2]{he2026orientedkestenmckaylawrandom} proves convergence
to the oriented Kesten--McKay law, whose explicit density has support
$\overline D(0,\sqrt d)$, and hence gives the corresponding lower bound.
This regular model has fixed in- and out-degrees and a different limiting
tree operator from the Poisson-degree model studied here.

For random Erd\H{o}s-Rényi directed graphs, the convergence of the ESD was proved in \cite{sah2023limiting}, but there is no explicit expression of the limiting law $\mu_d$ that could allow for a straightforward study of the spectrum. The proof uses the implicit representations of $\mu_d$ in terms of recursions on random trees.

For the oriented Poisson random graph model, Metz, Neri, and Rogers study these
recursions through the cavity method \cite[Section~4.2]{metz2019spectral}. Their stability calculation predicts the boundary $|z|=\sqrt d$, and their numerical solutions suggest a continuous component filling its interior, together with an atom at zero. These calculations motivate the full-disk question; they do not supply a rigorous proof of \eqref{eq:no-annular-gaps}.  Our theorem establishes the predicted outer radius.

\subsection{Theo McKenzie's work on the non-backtracking matrix}\label{sec:mckenzie}

An important motivation for the study of the spectral edge of random graphs lies in their use for clustering algorithms. 
When clustering random (non-oriented) graphs such as the Stochastic Block Model in the very sparse regime, it turns out that representing the graph with the non-backtracking matrix $B$ gives natural spectral clustering algorithms that work down to the informational limit, known under the name of the Kesten-Stigum threshold; we refer to the papers \cite{krzakala2013spectral, bordenave2015non, abbe2018community} and discussions therein for an overview of this large body of work. 
A key argument in these studies was to prove that the edge of the bulk of the spectrum of $B$ is smaller than $\sqrt{\kappa}$ with $\kappa=\mathbb{E}[D(D-1)]/\mathbb{E}[D]$ and $D$ the root progeny of the limiting tree. This result is similar in flavor to the Alon-Friedman theorem mentioned above. 
The matching lower bound was conjectured in \cite{bordenave2010resolvent}, and Theo McKenzie submitted a solution only a few days before we posted this paper: \cite{mckenzie2026alonboppanaboundnonbacktrackingoperator}. 

Although studying different models and matrices, both papers (ours and \cite{mckenzie2026alonboppanaboundnonbacktrackingoperator}) display the same argument, namely the study of the non-degeneracy of the cavity message $X_\eta$ on the infinite limiting tree; however, the convergence arguments are different, mostly because we can safely work with the Brown measure $\mu_d$, while the non-backtracking spectrum does not converge toward $\mu_d$. 

During the writing of this paper, GPT-5.6 found this core argument, and we spent time reworking and organizing the proof. 
At this moment, the core argument for the non-degeneracy of $X_\eta$ was similar to the one in Lemma 4.2 of \cite{mckenzie2026alonboppanaboundnonbacktrackingoperator}. We used Kesten's martingale limit on the limiting tree to construct a nontrivial "flow" satisfying the branching identity. But on a subsequent pass, GPT-5.6 found a much simpler and elementary proof, which is the one presented here in Section \ref{sec:noncollapse} and which does not rely on the recursion nor on Kesten's martingale, but only on very elementary bounds. For this reason, we think that
\begin{enumerate}
    \item The proof of \cite[Lemma 4.2]{mckenzie2026alonboppanaboundnonbacktrackingoperator} could be simplified as we did; for the (very simple) idea of the core argument, we refer to the paragraph after Proposition \ref{lem:noncollapse}.
    \item But the similarity of \cite[Lemma 4.2]{mckenzie2026alonboppanaboundnonbacktrackingoperator} and our first proof indicates that the "flow construction" found by GPT-5.6 in both problems is probably the most natural one. 
\end{enumerate}

\subsection{Extensions} There is no doubt that our proof method will extend to more complex models of random graphs. We have three extensions in mind: 
\begin{enumerate}
    \item First, studying the edge of the Brown measure of Galton-Watson trees that are more general than Poisson. Here, our proof almost translates verbatim, with the edge being located at $\sqrt{\kappa}$ instead of $\sqrt{d}$, and $\kappa$ is the mean of the size-biased version of the root progeny distribution. 
    \item Second, studying what happens when iid weights are added to the edges of the random graph. Here, the main difference will be that the limiting tree has weights $\xi_{v\to w}$ on the edges, and thus the recursion $X_\eta = V_\eta/(r^2+U_\eta V_\eta)$ remains valid, but with $U_\eta = \eta + \sum_{o \to w}\xi_{o \to w}^2 X^+_{\eta,w}$ instead of \eqref{def:U}, and similarly for $V_\eta$.
    \item We do not show that the full support of $\mu_d$ is $\bar{D}(0, \sqrt{d})$. A full proof would need to show that the cumulative radial density function $F$ is strictly increasing, and for that it is natural to prove that for $r>s$, $F_\eta(r)-F_\eta(s)$ is bounded away from $0$ when $\eta\to 0$ for the regularized version $F_\eta$. 
\end{enumerate}
We leave these extensions for future agentic readers. 

\subsection{Organization of the paper}

Section~\ref{sec:tree-brown-measure} introduces unimodular trees,
the logarithmic potential, the Hermitized resolvent, and the Brown measure
and its regularizations. It also proves rotational invariance of the Brown measure. A key distributional recursion between observables on the tree is proved in Section \ref{sec:recursion}. Then, Section~\ref{sec:edge-criterion} exploits this recursion to give a criterion for the outer edge, and reduces Theorem~\ref{thm:main} to whether a certain observable converges or to zero. 
Section~\ref{sec:noncollapse} studies this convergence and closes the proof of the main Theorem. Section~\ref{sec:cv-esd} explains the identification of the
tree Brown measure with the limiting empirical spectral measure, using
the singular-value estimates of \cite{sah2023limiting}.

\subsection{Acknowledgements and LLM disclosure}

GPT-5.6 assisted the proof development at the level of a coauthor (see also the discussion in Subsection \ref{sec:mckenzie}). This text was written entirely by the authors.  Y.Z. was partially supported by the Simons Grant MPS-TSM-00013944 and NSF DMS-2606337. This work was carried out while S.C. was visiting the Centre de Mathématiques Appliquées at École Polytechnique and while Y.Z. was visiting the Simons Institute for the Theory of Computing during the Spectral Theory Beyond Graphs program in Fall 2026.

\section{The directed tree and its Brown measure}\label{sec:tree-brown-measure}

In this section, we define the objects of interest for the proof of the main theorem.

\begin{definition}[PGW tree]The rooted directed Poisson--Galton--Watson tree of mean
$d$, denoted by $(T,o)$, is defined as follows. The root has independent numbers $N_o^+,N_o^-\sim\Pois(d)$ of outgoing and incoming neighbors. After an edge has been exposed, the vertex $v$ at its
other endpoint has, \emph{in addition to the parent edge}, a number $N_v^- \sim \Pois(d)$ of new incoming neighbors, and a number $N^+_v\sim \Pois(d)$ of outgoing neighbors; all these variables are independent. 
\end{definition}

We note $V$ the set of vertices of $T$. When there is a directed edge from a vertex $v$ toward a vertex $w$, we write $v\to w$. For a realization of the tree $T$, we define the (directed) adjacency operator on finitely supported functions on $V$ by
\[
(A_Tf)(v)=\sum_{w:v\to w}f(w).
\]
For example, $A_T \delta_0(v)$ is equal to 1 if $v$ is an ancestor of $o$, and 0 otherwise. The distribution of $(T,o)$ is \emph{unimodular}: for every nonnegative measurable
function $g(T,x,y)$ of a network with an ordered pair of vertices, it obeys
the mass-transport identity
\[
 \E\sum_{v\in V}g(T,o,v)
 =\E\sum_{v\in V}g(T,v,o).
\]
This is due to the fact that directed PGW trees are the local weak limit of the finite directed Erd\H{o}s--R\'enyi graph rooted at a uniformly chosen vertex; see \cite{aldous2007processes}. 

An \emph{equivariant operator} assigns an operator to each network in a
way that is independent of the root and compatible with network
isomorphisms. More precisely, let $\mu$ denote the law of the unimodular
random rooted network $(T,o)$. A \emph{bounded equivariant operator} is a measurable assignment
\[
  (T,o)\longmapsto B_{T,o}\in\mathscr B(\ell^2(V))
\]
such that $\|B_{T,o}\|\le C$ for $\mu$-almost every $(T,o)$,
where $C<\infty$ is a deterministic constant and $\mathscr B(\ell^2(V))$ denotes the bounded linear operators on $\ell^2(V)$.  We require that the assignment be independent of the root:
\[
  B_{T,o}=B_{T,o'}=:B_T
  \qquad\text{for all }o,o'\in V.
\]
We also require covariance under network isomorphisms: for every isomorphism $\phi:T_1\to T_2$, the unitary operator defined by $U_\phi\delta_v=\delta_{\phi(v)}$ satisfies
\[
  U_\phi B_{T_1}U_\phi^*=B_{T_2}.
\]
Together, these requirements ensure that the operator is independent
of the choice of root and that its matrix coefficients are preserved
under network isomorphisms.

Modulo $\mu$-almost-sure equality, these families of operators form a von Neumann algebra $\M$. Unimodularity implies that the map defined by
\begin{equation}\label{eq:network-trace}
  \tau(B):=\mathbb E_\mu\bigl[\langle B_T\delta_o,\delta_o\rangle\bigr]
\end{equation}
is a faithful normal tracial state on $\M$, simply called the \emph{trace}. 


We now recall the construction of the Brown measure of operators on $(\M,\tauop)$. The reader interested in the technical details will find in the seminal paper \cite{haagerup2007brown} a clear and comprehensive presentation.

\subsubsection*{The determinant class.}
Let $X$ be a closed, densely defined operator affiliated with $\M$.
Its modulus $|X|:=(X^*X)^{1/2}$ is a positive self-adjoint operator.
By the spectral theorem (see, for example, \cite{teschl2014mathematical}), it admits a projection-valued spectral measure $P_{|X|}$ such that
\[
  |X|=\int_{[0,\infty)} t\,\dd P_{|X|}(t).
\]
Since $X$ is affiliated with $\M$, the spectral projections $P_{|X|}(E)$ belong to $\M$. Consequently,
\begin{align}\label{eq:trace_spectral_distribution}
  \mu_{|X|}(E):=\tau\bigl(P_{|X|}(E)\bigr),
  \qquad E\subseteq[0,\infty)\ \text{Borel},
\end{align}
defines a probability measure, called the spectral distribution of $|X|$ with respect to $\tau$.

We say that $X$ belongs to the \emph{determinant class} $\M_\Delta$ of \cite[Definition~2.1]{haagerup2007brown} if
\begin{equation}
  \int_{[0,\infty)} \log^+(t)\,\dd\mu_{|X|}(t)<\infty,
\end{equation}
where $\log^+(t):=\max\{\log t,0\}$ for $t>0$ and $\log^+(0):=0$.
Under this condition, the extended-real integral
\[
  \tau(\log|X|)
  :=\int_{[0,\infty)}\log t\,\dd\mu_{|X|}(t)
  \in[-\infty,\infty)
\]
is well defined, with the convention $\log 0=-\infty$.
The \emph{Fuglede--Kadison determinant} of $X$ is then defined by
\[
  \det(X):=\exp\bigl(\tau(\log|X|)\bigr)\in[0,\infty),
\]
with $\exp(-\infty):=0$. Since $\log^+(t)\le t^2$ for all $t\ge0$,
\[
  \int_{[0,\infty)}\log^+(t)\,\dd\mu_{|X|}(t)
  \le \int_{[0,\infty)}t^2\,\dd\mu_{|X|}(t)
  =\tau(X^*X),
\]
where for a positive affiliated operator $Y$ we use the convention
\[
  \tau(Y):=\int_{[0,\infty)} t\,\dd\mu_Y(t)\in[0,\infty].
\]
Hence every affiliated operator $X$ satisfying
$\tau(X^*X)<\infty$ belongs to the determinant class. In particular, the adjacency operator $A$ of the PGW tree
satisfies
\begin{align}\label{eq:pgw-L2}
  \tau(A^*A)
  =\mathbb E_\mu\bigl[\|A\delta_o\|_2^2\bigr]
  =\mathbb E_\mu[N_o^-]
  =d.
\end{align}
Hence $A$ belongs to the determinant class and has a well-defined Fuglede--Kadison determinant.

\subsubsection*{The Brown measure.}
The determinant class is a vector space, so if $X$ belongs to the
determinant class, then so does $X-z$ for every $z\in\C$. We may
therefore define the logarithmic potential
\[
  L_X(z):=\log\det(X-z)=\tau(\log|X-z|),
  \qquad z\in\C.
\]
Thus $L_X:\C\to[-\infty,\infty)$, where the value $-\infty$ is
allowed. Crucially, $L_X$ is subharmonic and locally integrable on
$\C$; see \cite[Theorem~2.7]{haagerup2007brown}. Its distributional
Laplacian defines a Borel probability measure
\[
  \mu_X:=\frac{1}{2\pi}\Delta L_X,
\]
called the \emph{Brown measure} of $X$, where
$\Delta=\partial_x^2+\partial_y^2$. We emphasize that $\tau$ already
includes the expectation over the rooted network, and hence $\mu_X$
is deterministic. Moreover, the fact that $\mu_X$ has total mass one
is nontrivial; see \cite[Lemma~2.12]{haagerup2007brown}.

An important characterization of the Brown measure is that $\mu_X$ is
the unique probability measure on $\C$ satisfying
\[
  \int_{\C}\log^+|w|\,\dd\mu_X(w)<\infty
\qquad \text{and} \qquad
  L_X(z)
  =\int_{\C}\log|w-z|\,\dd\mu_X(w),
  \qquad z\in\C.
\]

For the adjacency operator $A$ of the directed
Poisson--Galton--Watson tree, we simply write
$L(z):=L_A(z)=\tau(\log|A-z|)$.
We denote its Brown measure by $\mu_d$ rather than $\mu_A$ to
emphasize its dependence on the parameter $d$.

\subsubsection*{The regularization.}
For $\eta>0$, the regularized log-potential is defined as 
\begin{equation}\label{eq:regularized-potential}
L_\eta(z)
=\frac12\tauop(\log\bigl((A-z)(A-z)^*+\eta^2\bigr)),\end{equation}
and the regularized Brown measure as 
\begin{equation}\label{eq:regularized-measure}
\mu_{d,\eta}=\frac1{2\pi}\Delta L_\eta.
\end{equation}
It is explained in \cite{haagerup2007brown} (Lemma 2.8 and proof therein) that $L_\eta$ is a decreasing function of $\eta$, and that $\lim_{\eta \downarrow 0^+}L_\eta(z) = L(z)$. They also prove that $L_\eta$ is locally integrable, so the dominated convergence theorem also shows that 
\begin{equation}\label{eq:L1-convergence}
L_\eta\longrightarrow L
\qquad\text{in }L^1_{\mathrm{loc}}(\C).
\end{equation}
Consequently, for every $\phi\in \mathscr{C}_c^\infty(\C)$,
\[
\int\phi\,\dd\mu_{d,\eta}
=\frac1{2\pi}\int L_\eta\,\Delta\phi\,\dd z
\longrightarrow
\frac1{2\pi}\int L\,\Delta\phi\,\dd z
=\int\phi\,\dd\mu_d.
\]

We finally define the regularized Brown measure,
\begin{equation}\label{eq:regularized-measure}
 \mu_{d,\eta}:=\frac1{2\pi}\Delta L_\eta,
\end{equation}
 where as before the Laplacien is in the sense of of distributions. 
We will recall why it is a probability measure in Section~\ref{sec:edge-criterion}. These facts and \eqref{eq:L1-convergence} will then give weak convergence
to $\mu_d$.



\subsubsection*{Radiality.} We conclude this section with a property that will play a crucial role in what follows. The result is classical and holds for any unimodular random tree. For completeness, we include a proof, but give the details only for $\mu_d$.

\begin{lemma}\label{lem:radial}
    $\mu_d$ and $\mu_{d,\eta}$ are rotationally invariant.
\end{lemma}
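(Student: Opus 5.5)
The plan is to exhibit, for every $\theta\in\mathbb R$, a unitary $U_\theta$ on $\ell^2(V)$ with $U_\theta A U_\theta^*=e^{i\theta}A$, and to check that conjugation by $U_\theta$ preserves the trace. Since a tree has no cycles, we can assign a ``depth'' function $h:V\to\mathbb Z$ with $h(w)=h(v)+1$ whenever $v\to w$. Concretely, normalize $h(o)=0$ and let $h(v)$ be the number of forward edges minus the number of backward edges along the unique path from $o$ to $v$. Set $U_\theta\delta_v=e^{-i\theta h(v)}\delta_v$. Then
\[
\langle U_\theta A U_\theta^*\delta_w,\delta_v\rangle=e^{i\theta(h(w)-h(v))}\mathbf 1_{v\to w}\cdot(-1)^0 ,
\]
so after fixing the sign convention of $h$ we get $U_\theta A U_\theta^*=e^{i\theta}A$. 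Changing the root changes $h$ by an additive constant, hence $U_\theta$ by a global phase. Conjugation by $U_\theta$ is therefore root-independent, even though $U_\theta$ itself is not an equivariant operator.

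Next, for any $z$ and $\eta\ge0$, set $B_z=(A-z)(A-z)^*+\eta^2$. Then $U_\theta B_z U_\theta^*=(e^{i\theta}A-z)(e^{i\theta}A-z)^*+\eta^2=B_{e^{-i\theta}z}$, using $|e^{i\theta}|=1$. By functional calculus, $U_\theta f(B_z)U_\theta^*=f(B_{e^{-i\theta}z})$ for every Borel $f$, in particular for $f=\log$ truncated at levels $\pm M$. Since $U_\theta$ is diagonal, $\langle U_\theta Y U_\theta^*\delta_o,\delta_o\rangle=\langle Y\delta_o,\delta_o\rangle$ for any bounded $Y$. Taking expectations gives
\[
\tau\bigl(f(B_z)\bigr)=\tau\bigl(f(B_{e^{-i\theta}z})\bigr).
\]
Applying this with $f=\log$ (after truncation and monotone convergence, noting that for $\eta>0$ the operator is bounded below) yields $L_\eta(z)=L_\eta(e^{-i\theta}z)$. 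Letting $\eta\downarrow0$ and using the monotone limit $L_\eta\to L$ gives $L(z)=L(e^{-i\theta}z)$. Alternatively one can argue directly with the spectral measures $\mu_{|A-z|}$, which are unitarily equivariant in the same way.

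Finally, the Laplacian commutes with rotations in the sense of distributions. For a test function $\phi$,
\[
\int \phi(e^{i\theta}\cdot)\,\dd\mu_{d,\eta}=\frac1{2\pi}\int L_\eta\,\Delta\bigl[\phi(e^{i\theta}\cdot)\bigr]=\frac1{2\pi}\int L_\eta(e^{-i\theta}w)\,\Delta\phi(w)\dd w=\int\phi\,\dd\mu_{d,\eta},
\]
and the same computation works for $\mu_d$. This proves the rotational invariance of both measures.

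The main obstacle I expect is technical rather than conceptual. $A$ is unbounded, so $B_z$ is an affiliated operator, not an element of $\M$, and the identity $U_\theta B_zU_\theta^*=B_{e^{-i\theta}z}$ must be checked on a common core, namely finitely supported vectors. One also has to ensure that the functional calculus commutes with unitary conjugation for these closed operators. I would handle this by noting that $U_\theta$ preserves finitely supported functions, so it maps the closure of $A$ onto the closure of $e^{i\theta}A$. Essential self-adjointness of $B_z$ on that core, or working with the resolvent $(B_z)^{-1}$, which is bounded for $\eta>0$, then transfers the identity to all Borel functions.
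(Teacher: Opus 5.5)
Your proof is correct and follows the paper's argument: your phases $e^{-i\theta h(v)}$ are exactly the paper's recursively defined $u_v$ with $\omega=e^{i\theta}$, and the remaining steps are the same — the conjugation identity, invariance of the root diagonal entry under a diagonal unitary, equality of traces, radiality of the potential and hence of its Laplacian. The only minor differences are that you obtain radiality of $L$ from that of $L_\eta$ via $\eta\downarrow0$ rather than directly from $|A-z|$ (you mention the direct route too), and that you write out the distributional Laplacian step explicitly; both are fine.
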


\begin{proof}
Fix $\omega\in\C$ with $|\omega|=1$. For each realization of the rooted tree,
construct phases $u_v$ recursively from $u_o=1$. If $v$ is closer to the root
than $w$, set
\[
 u_w=\overline\omega u_v\quad\text{when }v\to w,
 \qquad
 u_w=\omega u_v\quad\text{when }w\to v.
\]
The unique-path property makes this definition consistent and gives
\begin{equation}\label{eq:lem-rotation}
 u_o=1,\qquad u_v\overline{u_w}=\omega
 \quad\text{whenever }v\to w.
\end{equation}

Once we have this vector, we define a diagonal unitary operator by $D\delta_w=u_w\delta_w$ for any $w$. It fixes $\delta_o$. For any other $w$, we have 
\begin{align*}DA_TD^* \delta_w &= \overline{u_v}DA_T\delta_w\\
&= \overline{u_v}D \left(\sum_{v \to w}\delta_w\right)\\
&= \sum_{v \to w}\overline{u_v}u_w \delta_w \\
&= \omega \sum_{v \to w}\delta_w = \omega A_T \delta_w.
\end{align*}
By linearity, this is true for any finitely supported vectors, and then by limiting, it is true for every vector in the domain. Consequently
\[
 D(A_T-z)D^*=\omega(A_T-\overline\omega z),\qquad
 D|A_T-z|D^*=|A_T-\overline\omega z|.
\]
By the spectral functional calculus, the identity above
implies that, for every bounded Borel function
$f:[0,\infty)\to\mathbb R$,
\[
  f(|A_T-\overline\omega z|)
  =D f(|A_T-z|)D^*.
\]
Consequently, their diagonal entries at the root agree:
\begin{align*}
  \bigl\langle
    f(|A_T-\overline\omega z|)\delta_o,\delta_o
  \bigr\rangle
  &=
  \bigl\langle
    D f(|A_T-z|)D^*\delta_o,\delta_o
  \bigr\rangle\\
  &=
  \bigl\langle
    f(|A_T-z|)D^*\delta_o,D^*\delta_o
  \bigr\rangle\\
  &=
  \bigl\langle
    f(|A_T-z|)\delta_o,\delta_o
  \bigr\rangle.
\end{align*}
Taking expectations and using the definition of the trace gives
\[
  \tauop\!\left(f(|A-\overline\omega z|)\right)
  =
  \tauop\!\left(f(|A-z|)\right).
\]
In particular, taking $f=\mathbf1_E$ for every Borel set
$E\subseteq[0,\infty)$ shows that $|A-z|$ and
$|A-\overline\omega z|$ have the same spectral distribution
with respect to $\tauop$.
We now integrate $\log t$ against these identical distributions and obtain
\begin{align*}
  L(z)
  &=\int_{[0,\infty)}\log t\,\mathrm d\mu_{|A-z|}(t)=\int_{[0,\infty)}\log t\,
       \mathrm d\mu_{|A-\overline\omega z|}(t)=L(\overline\omega z).
\end{align*}

Similarly, $D((A_T-z)(A_T-z)^*+\eta^2)D^*=((A_T-\overline\omega z)(A_T-\overline wz)^*+\eta^2)$ and  $L_\eta(z)=L_{\eta}(\overline\omega z)$. Thus the log-potential is radial, and so is its Laplacian, and the Brown measure.
\end{proof}

Since $L(z)$ and $L_\eta(|z|)$ only depend on $|z|$, we define their radial parts $\ell$ and $\ell_\eta$ through
\[
L_\eta(z)=\ell_\eta(|z|), \qquad L(z) = \ell(|z|).
\]
As radial parts of subharmonic functions, these functions $\ell,\ell_\eta$ are differentiable almost everywhere in $r$.

\section{The resolvent recursion on the tree}\label{sec:recursion}

In all this section, we uncover a distributional recursion obeyed by the root resolvent. This recursion will be the key to study the edge of Brown measure of the tree. 

We fix $z\in\C$ with radius $|z|=:r>0$. 
Let $A$ be the directed adjacency operator of $T$, seen as an operator on $\ell^2(V)$ where $V$ is the vertex set of the tree. We will identify $A$ with an infinite matrix, with $A_{u,v} = \mathbf{1}_{u\to v}$. We introduce a copy $V'$ of $V$, and we now consider the Hermitized operator $H=H(z)$ on $\ell^2(V \oplus V')$, defined by 
$$H= \begin{pmatrix}
    0 & A - z \\ A^* - \bar{z} & 0
\end{pmatrix}.$$
We will use the following trick for notations: the additional vertices in $V'$ are just copies of vertices in $V$. Each vertex of the first $V$ will be denoted with letters like $u,v,w…$. The corresponding copies will be denoted with a prime, i.e. $u', v', w'$.  Thus, $u\in V \to u'\in V'$ is a bijection. We will reserve $o$ (respectively $o'$) for the root of $V$ (respectively $V'$). 
The matrix elements of $H$ are given, for any $u,v\in V$, by 
\begin{align*}&H_{u,v'} = \mathbf{1}_{u\to v} - z\mathbf{1}_{u=v},\quad H_{u',v} = \mathbf{1}_{v\to u} - \bar{z}\mathbf{1}_{u=v},\\
&H_{u,v}=0, \quad H_{u',v'}=0.
\end{align*}
We finally define the most important object of this section, the Hermitized resolvent:
\begin{equation}
    G = G_\eta(z) = (H(z) - i \eta)^{-1}.
\end{equation}


Put $B_z=A-z$ and, for $\eta>0$, define the positive self-adjoint operators
\begin{equation}\label{eq:PQ-definition}
 Q_\eta(z)=B_zB_z^*+\eta^2,\qquad
 P_\eta(z)=B_z^*B_z+\eta^2.
\end{equation}
We have a block identity for $G$, which will be used repeatedly in the sequel. 
\begin{lemma}
    \begin{equation}\label{eq:factorisation-G}
 G_\eta(z)=
 \begin{pmatrix}
 i\eta Q_\eta^{-1}&B_zP_\eta^{-1}\\
 B_z^*Q_\eta^{-1}&i\eta P_\eta^{-1}
 \end{pmatrix}.
\end{equation}
\end{lemma}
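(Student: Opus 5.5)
The plan is to verify the block formula directly: call the right-hand side of \eqref{eq:factorisation-G} $R$ and show $(H-i\eta)R=\Id$ and $R(H-i\eta)=\Id$. Throughout, $B=B_z$, $Q=Q_\eta(z)$ and $P=P_\eta(z)$. In block form,
\[
H-i\eta=\begin{pmatrix}-i\eta & B\\ B^* & -i\eta\end{pmatrix}.
\]
Since $A$ is bounded on the PGW tree only almost surely on the event of bounded degrees, and in general is unbounded, I would first treat everything on finitely supported vectors. Alternatively, I can note that $H$ is self-adjoint: it is essentially self-adjoint on finitely supported functions for locally finite trees, and we take its closure. In either case $H-i\eta$ is invertible with $\|G\|\le 1/\eta$. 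Likewise $Q$ and $P$ are self-adjoint with $Q,P\ge\eta^2$, so $Q^{-1}$ and $P^{-1}$ are bounded with norm at most $\eta^{-2}$.

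The key algebraic input is the intertwining relations
\[
BP^{-1}=Q^{-1}B,\qquad B^*Q^{-1}=P^{-1}B^*.
\]
These come from $BP=B(B^*B+\eta^2)=(BB^*+\eta^2)B=QB$: applying $Q^{-1}$ on the left and $P^{-1}$ on the right gives the first identity, and the second follows by taking adjoints.

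Next I compute the product $(H-i\eta)R$ block by block.

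\emph{Top-left.} This is $-i\eta\cdot i\eta Q^{-1}+BB^*Q^{-1}=(\eta^2+BB^*)Q^{-1}=\Id$.

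\emph{Top-right.} This is $-i\eta BP^{-1}+B\,i\eta P^{-1}=0$.

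\emph{Bottom-left.} This is $B^*\,i\eta Q^{-1}-i\eta B^*Q^{-1}=0$.

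\emph{Bottom-right.} This is $B^*BP^{-1}+\eta^2P^{-1}=PP^{-1}=\Id$.

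So $R$ is a right inverse of $H-i\eta$. The product $R(H-i\eta)$ is handled symmetrically using the intertwining relations. For instance, its top-left block is $\eta^2Q^{-1}+BP^{-1}B^*=\eta^2Q^{-1}+Q^{-1}BB^*=\Id$. Since $H-i\eta$ is invertible, a one-sided inverse already equals $G$.

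The only genuine obstacle is the domain bookkeeping when $A$ is unbounded, since products like $BB^*Q^{-1}$ need to make sense. I would handle this with functional calculus via the polar decomposition $B=U|B|$. This gives $BP^{-1}=U|B|(|B|^2+\eta^2)^{-1}$, which is bounded, and similarly for the other blocks, so every entry of $R$ is a bounded operator. The identities can then be checked on the dense core of finitely supported vectors and extended by continuity. If the paper only needs $A$ bounded (say, after truncation), this step is vacuous.
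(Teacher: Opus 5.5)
Your proof is correct, but it runs in the opposite direction from the paper's. The paper \emph{derives} $G$: from the scalar identity $(t-i\eta)^{-1}=(t+i\eta)/(t^2+\eta^2)$, functional calculus gives $G=(H+i\eta)(H^2+\eta^2)^{-1}$. It then notes $H^2+\eta^2=\diag(Q_\eta,P_\eta)$ and does one block multiplication. You instead \emph{verify} a candidate $R$ and appeal to uniqueness of the inverse.

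Your four blocks of $(H-i\eta)R$ are just $(H^2+\eta^2)\diag(Q^{-1},P^{-1})=\Id$ unpacked, so the algebra is the same. The paper's version needs no intertwining relations and no second check, while yours must invoke invertibility of $H-i\eta$ separately; both rest on self-adjointness of $H$.

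Two corrections to your domain discussion.

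First, the adjacency operator of a general locally finite tree need \emph{not} be essentially self-adjoint on finitely supported vectors. Spherically symmetric trees with fast-growing branching numbers are counterexamples. It does hold almost surely for the PGW tree, but you do not need it: with $B$ closed and $B^*$ its Hilbert adjoint, the block operator $H$ is automatically self-adjoint on $D(B^*)\oplus D(B)$.

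Second, no core or continuity argument is needed. $Q^{-1}$ maps into $D(BB^*)$ and $P^{-1}$ maps into $D(B^*B)$, so $R$ maps the whole space into $D(B^*)\oplus D(B)=D(H)$. Hence your computation $(H-i\eta)R=\Id$ holds everywhere, and $G=G(H-i\eta)R=R$. The intertwining relations, the polar decomposition and the left-inverse check can all be dropped.
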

\begin{proof}
Since $H(z)$ is self-adjoint and $\eta>0$, the scalar identity
\[
 (t-i\eta)^{-1}=\frac{t+i\eta}{t^2+\eta^2},
 \qquad t\in\mathbb R,
\]
gives $
 G_\eta(z)
 =(H(z)+i\eta\Id)(H(z)^2+\eta^2\Id)^{-1}$.
Squaring the block matrix $H(z)$ yields
\[
 H(z)^2+\eta^2\Id
 =
 \begin{pmatrix}
  B_zB_z^*+\eta^2\Id&0\\
  0&B_z^*B_z+\eta^2\Id
 \end{pmatrix}
 =
 \begin{pmatrix}
  Q_\eta&0\\
  0&P_\eta
 \end{pmatrix}.
\]
 Consequently, block multiplication gives
\[
\begin{aligned}
 G_\eta(z)
 &=
 \begin{pmatrix}
  i\eta\Id&B_z\\
  B_z^*&i\eta\Id
 \end{pmatrix}
 \begin{pmatrix}
  Q_\eta^{-1}&0\\
  0&P_\eta^{-1}
 \end{pmatrix}=
 \begin{pmatrix}
  i\eta Q_\eta^{-1}&B_zP_\eta^{-1}\\
  B_z^*Q_\eta^{-1}&i\eta P_\eta^{-1}
 \end{pmatrix}.
\end{aligned}
\]
\end{proof}



From now on fix $|z|=r>0$. If $o\to w$, delete that edge and let $T_w^+$
be the component rooted at $w$. If $y\to o$, define $T_y^-$ similarly.
Conditional on the root degrees, all these rooted branches are independent
directed PGW trees. Write $G_w^+$ and $G_y^-$ for their Hermitized
resolvents at the same $(z,\eta)$.

\begin{proposition}\label{prop:recursion}
The diagonal entries $G_{o,o}$ and $G_{o',o'}$ are purely imaginary and
have the same law. Define
\[
 X_\eta=-iG_{o,o},\qquad
 X_{\eta,w}^+=-i(G_w^+)_{w',w'},\qquad
 X_{\eta,y}^-=-i(G_y^-)_{y,y}.
\]
Then $0<X_\eta\le\eta^{-1}$.
Conditional on the root degrees, the branch messages are independent
copies of $X_\eta$, with a law not depending on these degrees.
Consequently the variables
\begin{align}
 U_\eta&=\eta+\sum_{o\to w}X_{\eta,w}^+,\label{def:U}\\
 V_\eta&=\eta+\sum_{y\to o}X_{\eta,y}^-\label{def:V}
\end{align}
are independent and identically distributed, each a compound Poisson
sum with parameter $d$. They satisfy the pathwise identity
\begin{equation}\label{eq:representation_of_X}
 X_\eta=\frac{V_\eta}{r^2+U_\eta V_\eta}.
\end{equation}
\end{proposition}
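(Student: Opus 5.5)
The plan is to obtain \eqref{eq:representation_of_X} as a Schur complement formula on a tree. The Hermitized operator $H$ is itself the weighted adjacency operator of a graph on $V\cup V'$. Its edges are $\{u,u'\}$ with weight $-z$ (respectively $-\bar z$ in the other direction), and $\{u,v'\}$ with weight $1$ whenever $u\to v$. Contracting the matching edges $\{u,u'\}$ recovers $T$, so this bipartite graph is again a tree. The Schur complement (resolvent) identity at a vertex of a tree then reads
\[
G_{o,o}=\Bigl(-i\eta-\sum_{x\sim o}|H_{o,x}|^2\,G^{(o)}_{x,x}\Bigr)^{-1},
\]
where $G^{(o)}$ is the resolvent of $H$ with the vertex $o$ removed. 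Removing $o$ splits the tree into several components. The first contains $o'$ together with all $y$ such that $y\to o$ and their branches. The others are one component for each $w$ with $o\to w$, namely the Hermitization of $T_w^+$ attached through $w'$.

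First I will settle positivity and the bound, which come from \eqref{eq:factorisation-G}. Indeed $G_{o,o}=i\eta\langle Q_\eta^{-1}\delta_o,\delta_o\rangle$ and $G_{o',o'}=i\eta\langle P_\eta^{-1}\delta_o,\delta_o\rangle$. Both are purely imaginary, and since $Q_\eta\ge\eta^2$ we get $0<X_\eta\le\eta^{-1}$.

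Next I compute the two kinds of neighbor contributions.
\begin{itemize}
\item For each $w$ with $o\to w$, block-diagonality gives $G^{(o)}_{w',w'}=(G_w^+)_{w',w'}=iX^+_{\eta,w}$.
\item For $o'$, I apply the same identity once more inside its component. The neighbors of $o'$ there are the $y$ with $y\to o$, each with weight $1$, and each heads the branch $T_y^-$. This gives $G^{(o)}_{o',o'}=\bigl(-i\eta-\sum_{y\to o} iX^-_{\eta,y}\bigr)^{-1}=i/V_\eta$.
\end{itemize}
Plugging both into the formula yields
\[
G_{o,o}=\Bigl(-i\eta-i r^2/V_\eta-i\sum_{o\to w}X^+_{\eta,w}\Bigr)^{-1}=\frac{i\,V_\eta}{r^2+U_\eta V_\eta},
\]
which is \eqref{eq:representation_of_X}.

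For the distributional statements, I use the branching description of the PGW tree. After removing the edge $o\to w$, the vertex $w$ has $\Pois(d)$ new in-neighbors and $\Pois(d)$ out-neighbors, all independent. So $T_w^+$ rooted at $w$ is a directed PGW tree, and likewise for $T_y^-$. Conditionally on $(N_o^+,N_o^-)$ the branches are i.i.d., with a law that does not depend on the degrees. Hence $U_\eta$ and $V_\eta$ are independent compound Poisson sums, provided the messages $X^+=-iG_{w',w'}$ and $X^-=-iG_{y,y}$ both have the law of $X_\eta$.

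That requires showing $G_{o',o'}\overset{d}{=}G_{o,o}$, which I will argue as follows.
\begin{itemize}
\item Reversing all edges preserves the PGW law, since in- and out-degrees play symmetric roles. Reversal maps $A$ to $A^*$.
\item We have $P_\eta(z)=(A^*-\bar z)^*(A^*-\bar z)+\eta^2$. For the reversed tree this is $Q_\eta(\bar z)$, up to entrywise complex conjugation, because $A$ has real entries. Conjugation does not change the real diagonal entry of the inverse.
\item The phase conjugation of Lemma~\ref{lem:radial} then replaces $\bar z$ by $z$ with the root entry unchanged.
\end{itemize}

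The main obstacle is rigor for unbounded operators. Degrees are unbounded, so $A$ and $H$ are unbounded, and the Schur complement formula for infinite trees needs justification. I would first use the standard fact that on a locally finite tree whose degrees have Poisson tails, $H$ is a.s.\ essentially self-adjoint on finitely supported vectors (Bordenave's criterion for Galton--Watson trees). Then I would prove the identity for the restrictions of $H$ to finite balls, where it is pure linear algebra, and pass to the limit by strong resolvent convergence. The same limit argument gives $G^{(o)}$ as the direct sum of the branch resolvents.
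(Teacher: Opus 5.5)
Your proof is correct. It follows the paper's strategy: a Schur complement at the root, decoupling of the branches on the tree, and edge reversal for the equality in law. The decomposition is organized differently, though.

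\paragraph{The Schur complement step.} The paper removes the pair $\{o,o'\}$ at once. It shows that $KL_{\mathrm{br}}^{-1}K^*=\diag\bigl(i(U_\eta-\eta),\,i(V_\eta-\eta)\bigr)$, where the mixed entries vanish because an out-neighbour and an in-neighbour lie in different branches. It then inverts the resulting $2\times2$ matrix.

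You instead view $H$ as a weighted tree on $V\cup V'$ and apply the scalar vertex Schur formula twice: first at $o$, then at $o'$ inside its component. This gives the continued fraction $G_{o,o}=\bigl(-iU_\eta-r^2G^{(o)}_{o',o'}\bigr)^{-1}$ with $G^{(o)}_{o',o'}=i/V_\eta$. Algebraically this is the same computation, since the $(1,1)$ entry of a $2\times2$ inverse can be obtained by one more Schur step. The paper's vanishing mixed terms appear in your version as the fact that $o'$ and the $w'$ lie in different components of the tree minus $o$.

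\paragraph{What each version gives.} The paper's block version yields the whole root block $G_{\mathrm{roots}}$ at once, including the off-diagonal entries that are reused in Lemma~\ref{lem:radial-cdf}. Your version delivers only $G_{o,o}$, so those entries would need a separate (easy) computation. On the other hand, you deal with the unboundedness of $A$ and $H$: essential self-adjointness on finitely supported vectors, truncation to finite balls, and strong resolvent convergence. The paper passes over this point when it applies Schur's formula directly.

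\paragraph{A slip in the equality-in-law step.} As written, $(A^*-\bar z)^*(A^*-\bar z)=(A-z)(A^*-\bar z)$, which equals $Q_\eta(z)-\eta^2$, not $P_\eta(z)-\eta^2$. The identity you need has the factors in the other order: $P_{\eta,T}(z)=(A^*-\bar z)(A^*-\bar z)^*+\eta^2=Q_{\eta,T^\leftarrow}(\bar z)$. With that correction your argument goes through.

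The phase rotation from Lemma~\ref{lem:radial} is then unnecessary. Since $A_T$ has real entries, $Q_{\eta,T^\leftarrow}(\bar z)=\overline{Q_{\eta,T^\leftarrow}(z)}$ entrywise. Entrywise conjugation does not change the real diagonal entry of the inverse, so $G_{o,o}(T^\leftarrow,z)=G_{o',o'}(T,z)$ directly. This is exactly the paper's argument.
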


\begin{proof}
We first examine the diagonal entries. The factorization
\eqref{eq:factorisation-G} gives
\[
 G_{o,o}=i\eta(Q_\eta^{-1})_{o,o},\qquad
 G_{o',o'}=i\eta(P_\eta^{-1})_{o,o}.
\]
Since $Q_\eta,P_\eta\ge\eta^2\Id$, 
\[
 (Q_\eta^{-1})_{o,o}
 =\langle Q_\eta^{-1}\delta_o,\delta_o\rangle
 =\|Q_\eta^{-1/2}\delta_o\|_2^2\in(0,\eta^{-2}],
\]
and the same argument applies to $P_\eta^{-1}$.  Thus the two root diagonal entries of $G$ are purely
imaginary, and
\[
 0<-iG_{o,o}\le\eta^{-1},\qquad
 0<-iG_{o',o'}\le\eta^{-1}.
\]

Since  edge reversal
preserves the directed PGW law, let \(T^\leftarrow\) be obtained by reversing every edge of \(T\). The directed PGW law is invariant under this operation, and
\[
A_{T^\leftarrow}=A_T^*.
\]
Since \(A_T\) has real coefficients,
\[
\begin{aligned}
Q_{\eta,T^\leftarrow}(z)
&=(A_T^*-zI)(A_T-\overline zI)+\eta^2I=\overline{(A_T^*-\overline zI)(A_T-zI)+\eta^2I}
=\overline{P_{\eta,T}(z)}.
\end{aligned}
\] Taking inverses and then the root diagonal coefficient gives
\[
\bigl(Q_{\eta,T^\leftarrow}(z)^{-1}\bigr)_{o,o}
=\overline{\bigl(P_{\eta,T}(z)^{-1}\bigr)_{o,o}}
=\bigl(P_{\eta,T}(z)^{-1}\bigr)_{o,o},
\]where the last equality holds because a positive operator has real diagonal coefficients.
Therefore, by the block factorization of \(G\),
\[
G_{o,o}(T^\leftarrow,z)
=i\eta\bigl(Q_{\eta,T^\leftarrow}(z)^{-1}\bigr)_{o,o}
=i\eta\bigl(P_{\eta,T}(z)^{-1}\bigr)_{o,o}
=G_{o',o'}(T,z).
\]
Therefore  $G_{o,o}$ and $G_{o',o'}$ have the same law.

Conditional on $N_o^+$ and $N_o^-$, deleting the root leaves independent
directed PGW trees whose laws do not depend on these degrees. Their
resolvents are therefore independent, and the equality of the two
diagonal laws just proved shows that every $X_{\eta,w}^+$ and
$X_{\eta,y}^-$ has the law of $X_\eta$. Moreover, the two root degrees
are independent $\Pois(d)$ variables. The outgoing and incoming
collections are consequently independent, and their sums in
\eqref{def:U}--\eqref{def:V} have the same compound-Poisson law, with the
same added constant $\eta$. 

\medskip

Next we show \eqref{eq:representation_of_X}.  
Order $o,o'$ first and group the remaining coordinates by branch. Then
\[
 H_T(z)-i\eta\Id
 =\begin{pmatrix}J&K\\K^*&L_{\mathrm{br}}\end{pmatrix},
 \qquad
 J=\begin{pmatrix}-i\eta&-z\\-\overline z&-i\eta\end{pmatrix}.
\]
The two rows of $K$ are indexed by $o,o'$, and its columns by all
remaining vertices and their copies. Explicitly, for every $v\ne o$,
\[
 \begin{pmatrix}
 K_{o,v}&K_{o,v'}\\
 K_{o',v}&K_{o',v'}
 \end{pmatrix}
 =\begin{pmatrix}
 0&\mathbf1_{\{o\to v\}}\\
 \mathbf1_{\{v\to o\}}&0
 \end{pmatrix}.
\]
The block $L_{\mathrm{br}}$ is block diagonal, with one block for each
branch $T_b$. Ordering the unprimed coordinates before the primed
coordinates within each branch gives
\[
 L_{\mathrm{br}}
 =\diag_b\!\left[
 \begin{pmatrix}
 -i\eta\Id&A_{T_b}-z\Id\\
 A_{T_b}^*-\overline z\Id&-i\eta\Id
 \end{pmatrix}
 \right].
\]
In particular, all entries between distinct doubled branches are zero.
The block corresponding to $T_b$ is $H_{T_b}(z)-i\eta\Id$, whose
inverse has norm at most $\eta^{-1}$. Therefore
$L_{\mathrm{br}}^{-1}$ is block diagonal with blocks $G_w^+$ and
$G_y^-$, and $\|L_{\mathrm{br}}^{-1}\|\le\eta^{-1}$.

Writing $G_{\mathrm{roots}}$ for the $2\times2$ block of $G$ indexed by
$o,o'$, Schur's formula gives
\[
 G_{\mathrm{roots}}=(J-KL_{\mathrm{br}}^{-1}K^*)^{-1}.
\]

Using these entries of $K$ and the block-diagonal form of
$L_{\mathrm{br}}^{-1}$, we obtain
\begin{align*}
 (KL_{\mathrm{br}}^{-1}K^*)_{o,o}
 &=\sum_{o\to w}\sum_{o\to v}(L_{\mathrm{br}}^{-1})_{w',v'}
 =\sum_{o\to w}(G_w^+)_{w',w'},\\
 (KL_{\mathrm{br}}^{-1}K^*)_{o',o'}
 &=\sum_{y\to o}\sum_{v\to o}(L_{\mathrm{br}}^{-1})_{y,v}
 =\sum_{y\to o}(G_y^-)_{y,y},\\
 (KL_{\mathrm{br}}^{-1}K^*)_{o,o'}
 &=\sum_{o\to w}\sum_{y\to o}(L_{\mathrm{br}}^{-1})_{w',y}=0,\\
 (KL_{\mathrm{br}}^{-1}K^*)_{o',o}
 &=\sum_{y\to o}\sum_{o\to w}(L_{\mathrm{br}}^{-1})_{y,w'}=0.
\end{align*}
In the first two lines, only terms from the same branch survive.
In the last two lines, an outgoing neighbor and an incoming neighbor
belong to distinct branches, since every underlying tree edge has only
one direction. Thus both mixed entries vanish.

By definition,
$(G_w^+)_{w',w'}=iX_{\eta,w}^+$ and
$(G_y^-)_{y,y}=iX_{\eta,y}^-$.
Using \eqref{def:U}--\eqref{def:V}, we obtain
\[
 KL_{\mathrm{br}}^{-1}K^*
 =\begin{pmatrix}i(U_\eta-\eta)&0\\0&i(V_\eta-\eta)\end{pmatrix}.
\]
The $2\times2$ matrix inversion formula therefore gives
\begin{equation}\label{eq:2x2inversion}
 G_{\mathrm{roots}}
 =\begin{pmatrix}-iU_\eta&-z\\-\overline z&-iV_\eta\end{pmatrix}^{-1}
 =\frac1{U_\eta V_\eta+|z|^2}
   \begin{pmatrix}iV_\eta&-z\\-\overline z&iU_\eta\end{pmatrix}.
\end{equation}
Taking its top-left entry and multiplying by $-i$ yields
\[
 X_\eta=-iG_{o,o}=\frac{V_\eta}{r^2+U_\eta V_\eta},
\]
which is \eqref{eq:representation_of_X}.
Empty sums are zero, so the same formulas hold when either root degree
is zero. If the root has no neighbors, the branch correction vanishes
and $U_\eta=V_\eta=\eta$.
\end{proof}

\section{The edge criterion}\label{sec:edge-criterion}

We now define the cumulative radial distribution functions,
\begin{align*}
&F_\eta(r)=\mu_{d,\eta}(\{z:|z|\le r\}), \quad 
F(r) = \mu_d(\{z:|z|\le r\}).
\end{align*}
The key to the proof of the main theorem is the following representation of $F_\eta$ in terms of the distribution of $X_\eta$. We recall that $X_\eta = (H(z) - i\eta)^{-1}_{o,o}$. 

\begin{lemma}\label{lem:radial-cdf}
For every $\eta>0$ and $r>0$, with  $|z|=r$,
\begin{equation}\label{eq:radial-cdf}
 F_\eta(r)
 =\E\left[\frac{r^2}{r^2+U_\eta V_\eta}\right]
 =1-\E\left[\frac{U_\eta V_\eta}{r^2+U_\eta V_\eta}\right].
\end{equation}
In particular, $\mu_{d,\eta}$ is a probability measure.
\end{lemma}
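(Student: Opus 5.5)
We compute the radial derivative of the regularized potential $\ell_\eta$ in terms of the off-diagonal root entry of $G_\eta(z)$, read that entry off the $2\times2$ inversion \eqref{eq:2x2inversion}, and then integrate the Laplacian over a disk with the divergence theorem. Write $B=B_z=A-z$ and $Q=Q_\eta(z)$. Since $\partial_z B=-\Id$ and $\partial_z B^*=\partial_z(A^*-\bar z)=0$, we have $\partial_z Q=-B^*$.

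\textbf{Step 1: derivative of $L_\eta$.} I want the formula
\[
\partial_z L_\eta(z)=\tfrac12\tauop\bigl(Q^{-1}\partial_z Q\bigr)=-\tfrac12\tauop\bigl(B^*Q^{-1}\bigr)=-\tfrac12\E\bigl[(B^*Q^{-1})_{o,o}\bigr].
\]
By the factorization \eqref{eq:factorisation-G}, $(B^*Q^{-1})_{o,o}=G_{o',o}$. The entry $G_{o',o}$ is the bottom-left entry of $G_{\mathrm{roots}}$, so by \eqref{eq:2x2inversion} it equals $-\bar z/(r^2+U_\eta V_\eta)$. Hence
\[
\partial_z L_\eta(z)=\frac{\bar z}{2}\,\E\Bigl[\frac{1}{r^2+U_\eta V_\eta}\Bigr].
\]
For a radial function, $\partial_z \ell_\eta(|z|)=\ell_\eta'(r)\,\bar z/(2r)$. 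This gives
\[
\ell_\eta'(r)=r\,\E\bigl[(r^2+U_\eta V_\eta)^{-1}\bigr],
\]
and this expression is continuous in $r$ by dominated convergence. The integrand is bounded by $r^{-2}$, and near $r=0$ by $\eta^{-2}$, since $U_\eta,V_\eta\ge\eta$.

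\textbf{Step 2: mass of the disk.} Since $L_\eta$ is then $C^1$, its distributional Laplacian is $(2\pi)^{-1}r^{-1}(r\ell_\eta')'$ in radial form. Integrating it over $\overline D(0,r)$ (Green's formula, or equivalently testing against radial cutoffs) gives
\[
F_\eta(r)=\frac{1}{2\pi}\cdot 2\pi r\,\ell_\eta'(r)=\E\Bigl[\frac{r^2}{r^2+U_\eta V_\eta}\Bigr].
\]
There is no contribution from the origin, because $r\ell_\eta'(r)\to0$ as $r\to0$. Positivity of $\mu_{d,\eta}$ follows from subharmonicity of $L_\eta$, which is proved in \cite{haagerup2007brown}. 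As $r\to\infty$ we have $F_\eta(r)\to1$ by dominated convergence, since $U_\eta V_\eta<\infty$ almost surely. So $\mu_{d,\eta}$ has total mass one.

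\textbf{Main obstacle.} The delicate point is justifying Step 1, namely differentiating $\tauop(\log Q)$ under the trace and using cyclicity, when $A$ is unbounded because the PGW degrees are unbounded. I would first use
\[
\log x=\int_0^\infty\Bigl(\frac{1}{1+t}-\frac{1}{x+t}\Bigr)\dd t .
\]
This reduces the problem to differentiating $\tauop((Q+t)^{-1})$, and the resolvent identity gives
\[
(Q(z+h)+t)^{-1}-(Q(z)+t)^{-1}=-(Q(z+h)+t)^{-1}\bigl(Q(z+h)-Q(z)\bigr)(Q(z)+t)^{-1}.
\]
The difference $Q(z+h)-Q(z)$ equals $-hB^*-\bar hB+|h|^2$. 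It appears sandwiched between resolvents, and the products $B^*(Q+t)^{-1}$ and $(Q+t)^{-1}B$ are bounded elements of $\M$ with norm at most $(\eta^2+t)^{-1/2}$. Traciality therefore only needs to be applied to bounded operators, or to products of a bounded operator with one of finite $L^2$ trace, using $\tauop(A^*A)=d<\infty$ from \eqref{eq:pgw-L2}.

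It remains to dominate the $t$-integral uniformly in $h$. The key estimate is $\|(Q+t)^{-1}B\|\le (\eta^2+t)^{-1/2}$, so the derivative integrand is $O\bigl((\eta^2+t)^{-3/2}\bigr)$, which is integrable in $t$. This lets me exchange derivative, integral and $\tauop$. I expect this bookkeeping, rather than the algebra, to be where care is needed.
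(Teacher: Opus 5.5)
Your proposal is correct and follows the paper's route. You use the trace formula $\partial_z L_\eta=-\tfrac12\tau(B_z^*Q_\eta^{-1})$, read off $G_{o',o}=-\bar z/(r^2+U_\eta V_\eta)$ from the Schur inversion \eqref{eq:2x2inversion}, derive $\ell_\eta'(r)=r\,\E[(r^2+U_\eta V_\eta)^{-1}]$ from radiality, and apply Green's formula. Beyond the paper, you justify differentiating under the trace for unbounded $A$ (where the paper cites Haagerup--Schultz) and obtain the identity for every $r$ rather than almost every $r$.

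The one slip is in the total-mass step. Since $U_\eta V_\eta$ itself depends on $r$, its almost sure finiteness at each fixed $r$ does not give $F_\eta(r)\to1$. You need the bound $U_\eta V_\eta\le(\eta+N_o^+/\eta)(\eta+N_o^-/\eta)$, which is uniform in $z$ and comes from $X_\eta\le\eta^{-1}$, exactly as in the paper.
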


\begin{proof}[Proof of \eqref{eq:radial-cdf}] 
By Green's formula and the radiality of $L_\eta$, we have
\begin{equation}\label{eq:green-radial}
F_\eta(r)
=\frac1{2\pi}\int_{|z|\le r}\Delta L_\eta(z)\,\dd z
=\frac1{2\pi}\int_{|z|=r}\partial_nL_\eta(z)\,\dd s
=r\ell_\eta'(r)
\end{equation}
where $\partial_n$ is the normal derivative. This is valid only for radii $r$ at which $\ell_\eta$ is differentiable, that is, for almost every $r>0$. We now compute $\ell_\eta'(r)$. Let us note 
$$h_\eta=(A-z)(A-z)^*+\eta^2,$$ 
so that by definition (see \eqref{eq:regularized-potential}) $L_\eta = (1/2)\tau(\log h_\eta)$. With the convention
$\partial_z=\frac12(\partial_x-i\partial_y)$ and a simple use of the chain rule, it can be proved that
\begin{align*}
\partial_zL_\eta(z)&=\frac{1}{2}\tauop\bigl((\partial_z h_\eta )h_\eta^{-1}\bigr)\\&=-\frac{1}{2}\tauop\bigl((A-z)^*h_\eta^{-1}\bigr)\\
&= -\frac{1}{2}\mathbb{E}\left[\left\langle (A-z)^* h_\eta^{-1}\delta_o, \delta_o \right\rangle\right]
\end{align*}
see for instance the discussion above Equation (2.19) in \cite{haagerup2007brown}. 

As a consequence of \eqref{eq:factorisation-G} together with \eqref{eq:2x2inversion}, we get
\begin{align*}
\langle (A-z)^* h_\eta^{-1}\delta_o, \delta_o \rangle &= \langle (H-i\eta)^{-1}\delta_{o’},\delta_o \rangle \\&=-\frac{\bar{z}}{U_\eta V_{\eta} + r^2}
\end{align*}
and thus
\begin{equation}\label{eq:potential-derivative-cavity}
\partial_zL_\eta(z)
=\frac{\overline z}{2}\E\left[\frac1{r^2+U_\eta V_\eta}\right].
\end{equation}
On the other hand, the fact that $L_\eta$ is radial gives
$\partial_zL_\eta(z)=\frac{1}{2r}\overline z\,\ell_\eta'(r)$. Since $\bar{z}\neq 0$, comparing this with \eqref{eq:potential-derivative-cavity} gives 
\[
\ell_\eta'(r)
=\E\left[\frac{r}{r^2+U_\eta V_\eta}\right].
\]
At this point, we directly see that this whole expression does depend on $z$ only through $|z|=r$. Moreover, combining this with \eqref{eq:green-radial} and taking the complement $1-F_\eta(r)$ yields \eqref{eq:radial-cdf}.

    Finally, Proposition~\ref{prop:recursion} implies
$U_\eta\le\eta+N_o^+/\eta$ and
$V_\eta\le\eta+N_o^-/\eta$. Their independence yields, uniformly in $z$,
\[
 0\le1-F_\eta(r)
 \le\frac{\E[U_\eta V_\eta]}{r^2}
 \le\frac{(\eta+d/\eta)^2}{r^2}.
\]
For fixed $\eta$, the last bound tends to zero as $r\to\infty$.
Continuity from below of the measure proves
$\mu_{d,\eta}(\C)=\lim_{r\to\infty}F_\eta(r)=1$.
\end{proof}

Equipped with this representation, we can now better understand the support of the measure $\mu_d$ in terms of the limiting distribution of $X_\eta$. 

\begin{theorem}At almost any continuity point $r$ of $F$, we have the equivalence
    \begin{equation}
    \label{eq:main-equivalence}
    F(r)=1 \quad \Longleftrightarrow \quad X_\eta\xrightarrow[]{} 0 \text{ in probability as $\eta\downarrow0$ for some } |z|=r.
\end{equation}
\end{theorem}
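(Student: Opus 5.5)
Our plan is to pass to the limit $\eta\downarrow0$ in Lemma~\ref{lem:radial-cdf} and to translate the condition $1-F(r)=0$ into a statement about $U_\eta V_\eta$, and then about $X_\eta$.

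\emph{Step 1: limit of the radial distribution functions.} By \eqref{eq:L1-convergence}, $\mu_{d,\eta}\to\mu_d$ vaguely. Since $\mu_{d,\eta}$ and $\mu_d$ are probability measures (Lemma~\ref{lem:radial-cdf}), the convergence is weak. Rotational invariance (Lemma~\ref{lem:radial}) then gives $F_\eta(r)\to F(r)$ at every continuity point $r$ of $F$. Combined with \eqref{eq:radial-cdf}, this yields, at such $r$,
\[
1-F(r)=\lim_{\eta\downarrow0}\E\left[\frac{U_\eta V_\eta}{r^2+U_\eta V_\eta}\right].
\]
The integrand lies in $[0,1]$ and is an increasing function of $U_\eta V_\eta$. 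Hence $F(r)=1$ if and only if $U_\eta V_\eta\to0$ in probability. We restrict to almost every $r$ only to discard the countably many discontinuity points and to stay at radii where \eqref{eq:green-radial} holds.

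\emph{Step 2: from $U_\eta V_\eta$ to $X_\eta$.} The variables $U_\eta$ and $V_\eta$ are i.i.d.\ compound Poisson sums of independent copies of $X_\eta$, each shifted by $\eta$.

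Suppose first that $X_\eta\to0$ in probability. Conditionally on $N_o^\pm$, the sums $U_\eta-\eta$ and $V_\eta-\eta$ are finite sums of copies of $X_\eta$, so they tend to $0$ in probability. Since $N_o^\pm$ is tight, $U_\eta\to0$ and $V_\eta\to0$ in probability, and therefore $U_\eta V_\eta\to0$.

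Conversely, suppose $U_\eta V_\eta\to0$ in probability. Because $U_\eta$ and $V_\eta$ are independent with the same law, for any $\varepsilon>0$,
\[
\Prob(U_\eta>\varepsilon)^2=\Prob(U_\eta>\varepsilon,\,V_\eta>\varepsilon)\le\Prob(U_\eta V_\eta>\varepsilon^2)\to0.
\]
So $U_\eta\to0$ in probability. On the event $\{N_o^+\ge1\}$, which has probability $1-e^{-d}>0$, we have $U_\eta\ge X^+_{\eta,w}$ for some child $w$. Since $N_o^+$ is independent of the messages and $X^+_{\eta,w}$ has the law of $X_\eta$,
\[
\Prob(X_\eta>\varepsilon)(1-e^{-d})\le\Prob(U_\eta>\varepsilon)\to0,
\]
and hence $X_\eta\to0$ in probability. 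The phrase ``for some $|z|=r$'' is harmless: by Lemma~\ref{lem:radial} the law of $X_\eta$ depends on $z$ only through $r$, so ``for some'' is the same as ``for all''.

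\emph{Main obstacle.} Step 2 is elementary, so the delicate part is Step 1. It requires the vague convergence of $\mu_{d,\eta}$ to be upgraded to convergence of the distribution functions. We would justify this by tightness: both families are probability measures and $\mu_d$ is supported in $\overline D(0,\sqrt d)$ by \eqref{eq:outer-containment}, which turns vague convergence into weak convergence. The rest is the portmanteau theorem applied to the closed disks $\{|z|\le r\}$, whose boundaries are $\mu_d$-null at continuity points. One should also check that the identity $F_\eta(r)=r\ell_\eta'(r)$ from \eqref{eq:green-radial} really equals the closed-disk mass $F_\eta(r)$ for the relevant $r$. This is why we expect the statement to hold only for almost every $r$ rather than every $r$.
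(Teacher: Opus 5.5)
Your proof is correct and follows essentially the same route as the paper. Weak convergence $\mu_{d,\eta}\to\mu_d$ gives $F_\eta(r)\to F(r)$ at continuity points, \eqref{eq:radial-cdf} turns $1-F(r)$ into $\lim_{\eta\downarrow0}\E[U_\eta V_\eta/(r^2+U_\eta V_\eta)]$, and the independence trick $\Prob(U_\eta>\varepsilon)^2\le\Prob(U_\eta V_\eta>\varepsilon^2)$ together with the compound-Poisson structure links this to $X_\eta$. The differences are cosmetic: you route both directions through the single condition $U_\eta V_\eta\to0$, and you recover $X_\eta\to0$ from $U_\eta\to0$ via one child on $\{N_o^+\ge1\}$ rather than the paper's bound $X_\eta\le V_\eta/r^2$; your appeal to $\operatorname{supp}\mu_d\subseteq\overline D(0,\sqrt d)$ is unnecessary, since vague convergence of probability measures to a probability measure is already weak.
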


\begin{proof}Let us start by an important fact, namely that if $X_\eta$ goes to zero in probability, then so do $U_\eta, V_\eta$, and vice-versa. The law of $X_\eta(z)$ depends only on $|z|$, by
Lemma~\ref{lem:radial} and \eqref{eq:factorisation-G}.
Write $U_\eta=\eta+\sum_{j=1}^N X^+_{\eta,j}$ in distribution,
where $N\sim\Pois(d)$ is independent of the i.i.d.\ branch messages $X_{\eta,j}^+$. Let us fix some $\delta>0$. 
For $\eta<\delta/2$ and any integer $M\ge1$,
\begin{equation}\label{eq:compound-poisson-tail}
 \Prob(U_\eta>\delta)
 \le\Prob(N>M)+M\Prob\left(X_\eta>\frac{\delta}{2M}\right).
\end{equation}
Indeed, when $N\le M$, a sum exceeding $\delta/2$ must contain
a term exceeding $\delta/(2M)$.
Thus $X_\eta\to0$ in probability implies $U_\eta,V_\eta\to0$ in
probability, by first fixing $M$ and sending $\eta$ to 0, then letting $M\to\infty$. Conversely, if $U_\eta$ goes to zero in probability, then the inequality $0 \le X_\eta \le V_\eta/r^2$ ensures that $X_\eta$ also goes to zero in probability.

 Let us prove the $\Leftarrow$ implication. We consider the event where both $U_\eta$ and $V_\eta$ are smaller than $\delta$.
On this event, $U_\eta V_\eta\le\delta^2$, so
$U_\eta V_\eta/(r^2+U_\eta V_\eta)\le\delta^2/(r^2+\delta^2)$;
on its complement, the fraction is smaller than 1. Hence
\[
 0\le1-F_\eta(r)
 =\E\left[\frac{U_\eta V_\eta}{r^2+U_\eta V_\eta}\right]
 \le\frac{\delta^2}{r^2+\delta^2}
      +\Prob(U_\eta>\delta)+\Prob(V_\eta>\delta).
\]
First let $\eta\downarrow0$ with $\delta$ fixed, so that the two probabilities tend to 0; then let $\delta\downarrow0$ and the remaining term $\delta^2/(r^2+\delta^2)$ goes to zero, thus proving that $F_\eta(r)\to1$. 
Finally, $r$ is a continuity point of $F$, so
weak convergence of $\mu_{d,\eta}$ also gives $F_\eta(r)\to F(r)$.
Consequently $F(r)=1$.

Let us finally prove the $\Rightarrow$ implication. Suppose $F(r)=1$ and fix $\delta>0$.
Since $r$ is a continuity point of $F$, the weak convergence of $\mu_{d,\eta}$ to $\mu_d$ gives $F_\eta(r)\to F(r)=1$.
On the event $\{U_\eta>\delta,V_\eta>\delta\}$ we have
$U_\eta V_\eta>\delta^2$ and $U_\eta V_\eta / (r^2+U_\eta V_\eta) \ge \delta^2/(r^2+\delta^2)$. Hence, pointwise,
\[
 \frac{U_\eta V_\eta}{r^2+U_\eta V_\eta}
 \ge\frac{\delta^2}{r^2+\delta^2}
       \mathbf1_{\{U_\eta>\delta,V_\eta>\delta\}}.
\]
Taking expectations and using \eqref{eq:radial-cdf}, we obtain
\[
 1-F_\eta(r)
 =\E\left[\frac{U_\eta V_\eta}{r^2+U_\eta V_\eta}\right]
 \ge\frac{\delta^2}{r^2+\delta^2}
       \Prob(U_\eta>\delta,V_\eta>\delta).
\]
 Rearranging the terms and using the independence between $U_\eta$ and $V_\eta$ gives
\[
 \Prob(V_\eta>\delta)^2
 =\Prob(U_\eta>\delta,V_\eta>\delta)
 \le\frac{r^2+\delta^2}{\delta^2}\bigl(1-F_\eta(r)\bigr)
 \longrightarrow0.
\]
Thus $V_\eta$ goes to zero in probability, and so does $X_\eta$. 
\end{proof}

Thanks to this criteron, we only have to study the convergence in probability of $X_\eta$. This is done in the next Proposition, which is the crucial argument of this paper. 

\begin{proposition}\label{lem:noncollapse}
$X_\eta$ does not converge in probability to zero when $\eta \downarrow 0$ if $|z|=r<\sqrt{d}$. 
\end{proposition}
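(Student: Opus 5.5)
The plan is to argue by contradiction through a truncated first-moment inequality extracted from the recursion \eqref{eq:representation_of_X}. Fix $|z|=r<\sqrt d$ and assume $X_\eta\to0$ in probability. By the first step of the proof of the edge criterion above, $U_\eta$ and $V_\eta$ then also tend to $0$ in probability. When $U_\eta$ is small, \eqref{eq:representation_of_X} says that $X_\eta\approx V_\eta/r^2=(\eta+\sum_{y\to o}X^-_{\eta,y})/r^2$. This is a linear recursion with mean multiplication factor $d/r^2>1$, and it cannot have a solution that is positive yet asymptotically degenerate at $0$. The whole proof makes this heuristic quantitative. Only elementary bounds are used: no fixed-point theory, and no martingale on the tree.

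The observable will be $m_\eta:=\E[X_\eta\wedge a]$ for a small threshold $a>0$, to be chosen depending on $r$ and $d$ but not on $\eta$. This quantity is strictly positive, because $X_\eta>0$ by Proposition~\ref{prop:recursion}, and it is finite. Under the contradiction hypothesis it also satisfies $m_\eta\to0$. Fix $\delta>0$ small. On the event $\{U_\eta\le\delta\}$, \eqref{eq:representation_of_X} gives $X_\eta\ge V_\eta/(r^2+\delta V_\eta)$. Since $x\mapsto x/(r^2+\delta x)$ is increasing, truncating at $a$ yields
\[
X_\eta\wedge a\;\ge\;\frac{V_\eta\wedge a}{r^2+\delta a}\,\mathbf 1_{\{U_\eta\le\delta\}},
\]
at least when $a\le 1$ and $\delta$ is small. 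Now take expectations. The independence of $U_\eta$ and $V_\eta$ in Proposition~\ref{prop:recursion} splits the right-hand side into $\Prob(U_\eta\le\delta)\,\E[V_\eta\wedge a]/(r^2+\delta a)$, and the first factor tends to $1$.

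The core step is to compare $\E[V_\eta\wedge a]$ with $d\,m_\eta$, where $V_\eta\ge\sum_{j\le N}X_j$, $N\sim\Pois(d)$, and the $X_j$ are i.i.d.\ copies of $X_\eta$ independent of $N$. The pointwise lower bound is
\[
\Bigl(\sum_{j\le N}X_j\Bigr)\wedge a\;\ge\;\sum_{j\le N}(X_j\wedge a)\,\mathbf 1_{\{\sum_{k\ne j,\,k\le N}X_k\le \epsilon\}}\;-\;(\text{overlap}).
\]
This is valid because at most one index can contribute significantly when all the other terms are tiny. More cleanly, I would use $(\sum_j x_j)\wedge a\ge (x_j\wedge a)$ for each $j$ on the event $E_j$ that all other $x_k\le\epsilon$. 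I would then sum over $j$, correcting for double counting: the events $E_j$ can overlap only if at most one term exceeds $\epsilon$, and in that case every other contribution is at most $\epsilon\wedge a$. Conditional on $X_j$, the Poisson thinning (Mecke) formula handles $\E\sum_j(X_j\wedge a)\mathbf 1_{E_j}$ exactly: it equals $d\,m_\eta\,\Prob(\sum_{k\le N}X_k\le\epsilon)=d\,m_\eta(1-o(1))$. The double-counting error is bounded by $d^2\,\E[X\wedge\epsilon]\cdot\E[X\wedge a]$-type terms, which are $o(m_\eta)$ because $\E[X_\eta\wedge\epsilon]\to0$. Putting everything together gives
\[
m_\eta\;\ge\;\frac{d}{r^2+\delta a}\,(1-o(1))\,m_\eta .
\]
Choose $\delta,a$ so that $d/(r^2+\delta a)>1$, which is possible because $r^2<d$. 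Since $m_\eta>0$, this inequality is impossible for small $\eta$, and this contradiction proves the proposition.

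I expect the main obstacle to be this sum-versus-truncation step. The map $x\mapsto x\wedge a$ is subadditive, which is the wrong direction for a lower bound, so the overlap and error terms must be shown to be $o(m_\eta)$ rather than merely $o(1)$. The danger is that $m_\eta$ itself decays, so an additive $o(1)$ error would destroy the argument. The first thing I would try is to express every error as a product of $m_\eta$ (or $\E[X_\eta\wedge a]$) with a vanishing probability $\Prob(X_\eta>\epsilon)$ or $\Prob(U_\eta>\delta)$, using independence of the branches and Mecke's formula. If that fails, I would fall back on replacing $x\wedge a$ by a smooth concave observable such as $x/(1+x/a)$ and expanding to second order.
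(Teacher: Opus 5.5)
Your overall plan matches the paper's proof. Both use a truncated first moment of $X_\eta$, a lower bound on $X_\eta$ on the event that $U_\eta$ is small, independence of $U_\eta$ and $V_\eta$ to factor out a probability tending to $1$, and a contradiction from a multiplicative constant exceeding $1$. The difference is in the sum-versus-truncation step, and one step is false as written.

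\textbf{The truncation inequality.} The bound $X_\eta\wedge a\ge (V_\eta\wedge a)/(r^2+\delta a)$ on $\{U_\eta\le\delta\}$ requires $r^2+\delta a\ge1$; it does not hold under ``$a\le1$ and $\delta$ small''. Suppose $r^2+\delta a<1$ and $V_\eta=a$. Then $X_\eta\ge a/(r^2+\delta a)>a$, so $X_\eta\wedge a=a$, which is strictly below your claimed bound. The correct statement is $X_\eta\wedge a\ge c\,(V_\eta\wedge a)$ with $c=\min\{1,(r^2+\delta a)^{-1}\}$. Your final condition then becomes $c\,d>1$, which needs $d>1$ in addition to $r^2<d$. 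This is why the paper chooses $\alpha<\min\{1,r^{-2}\}$: for $r<1$ the contradiction actually comes from $d>1$.

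\textbf{The core step.} With that repair your route works, and you can drop the overlap bookkeeping entirely. Keep the indicator from your first display and take $a\ge2\epsilon$. Writing $Y=\sum_{j\le N}X_j$, the inequality
\[
 Y\wedge a\ \ge\ \sum_{j\le N}(X_j\wedge a)\,\mathbf 1_{\{Y-X_j\le\epsilon\}}
\]
holds pointwise with no correction term. Indeed, if two indices $i\ne j$ both satisfy the constraint, then $X_i\le Y-X_j\le\epsilon$, so $Y\le 2\epsilon\le a$, and the right side is at most $Y=Y\wedge a$. The Poisson size-biasing (Mecke) identity then gives
\[
\E[V_\eta\wedge a]\ge \E[Y\wedge a]\ge d\,m_\eta\,\Prob(Y\le\epsilon),
\]
and $\Prob(Y\le\epsilon)\to1$ because $Y\le V_\eta$.

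Your alternative events ``all other $X_k\le\epsilon$'' are workable but messier. Mecke's formula then produces $\Prob(\max_{k\le N}X_k\le\epsilon)$, not the probability you wrote. The overlap is $(Y-a)_+$ on $\{\max_k X_k\le\epsilon\}$. A second-moment bound on it has, besides your cross term, a diagonal part of order $(d\epsilon/a)\,m_\eta$, so $\epsilon$ must be taken small.

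\textbf{Comparison with the paper.} The paper uses $\min\{1,\sum_jt_j\}\ge1-\prod_j(1-t_j)$ with $t_j=\alpha\min\{1,X^-_{\eta,j}\}$. The Poisson generating function evaluates the expectation in closed form as $1-e^{-d\alpha u_\eta}$, and $u_\eta\to0$ linearizes it. Your one-big-jump argument instead uses $\Prob(Y\le\epsilon)\to1$, and dividing by $m_\eta>0$ gives directly
\[
1\ge c\,d\,\Prob(U_\eta\le\delta)\,\Prob(Y\le\epsilon),
\]
without needing $m_\eta\to0$. The paper's product trick is shorter; yours stays closer to the first-moment heuristic. Both are equally elementary.
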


Let us give an informal exposition of the proof, which goes by contradiction. The entry point is the representation
$$X_\eta = \frac{V_\eta}{r^2+U_\eta V_\eta}.$$
Let us suppose that $X_\eta \to 0$ in probability. We saw earlier that this implies that $U_\eta \to 0$ in probability when $\eta \to 0$. But then, we should have (informally) that $X_\eta \sim V_\eta /r^2$. Now, by definition, $V_\eta =\eta+\sum_{j=1}^N X^-_{\eta,j} \ge \sum_{j=1}^N X^-_{\eta,j}$, where the $X_{\eta,j}^-$ are iid copies of $X_\eta$ and $N$ is a Poisson random variable with mean $d$. If we could take expectations, we would thus get 
$$\E[X_\eta] \gtrsim \frac{1}{r^2}\E\left[\sum_{i=1}^N X^-_{\eta,j}\right] = \frac{d}{r^2}\E[X_\eta],$$
which leads to a contradiction for every $r < \sqrt{d}$. The only technical challenge here is a quantitative control on the approximation $X_\eta \sim V_\eta/r^2$, which is rigorously worked out in the full proof of Proposition \ref{lem:noncollapse} in Section \ref{sec:noncollapse}. Once the Proposition is accepted, we are in position to close the proof of the main theorem. 

\begin{proof}[Proof of Theorem~\ref{thm:main}]
    In the equivalence \eqref{eq:main-equivalence}, the restriction on $r$ being a continuity point for $F$ is only apparent. Indeed, $F$ is an increasing and measurable function, and thus only has a countable number of discontinuities. For any $r<\sqrt{d}$, one can always find a continuity point $s \in (r,\sqrt{d})$ for which the preceding Proposition shows that $X_{\eta}$ does not go to zero in probability, and thus $F(r)\leqslant F(s)<1$. 
\end{proof}

\section{Proof of the non-degenerescence of the cavity message $X_\eta$}\label{sec:noncollapse}

We now prove Proposition \ref{lem:noncollapse}.
Fix $z\in\C$ with $0<|z|=r<\sqrt d$. We will prove the following stronger statement: 
\begin{equation}\label{eq:prop-target-0}
 \text{there exist }\delta,\beta,\eta_0>0\text{ such that }
 \Prob(X_\eta\ge\delta)\ge\beta
 \quad\text{for all }0<\eta<\eta_0.
\end{equation}
We start by choosing a number $\alpha$ satisfying
\[
 d^{-1}<\alpha<\min\{1,r^{-2}\}.\]
 This is possible because $r^2<d$ and $1<d$. Then, we set
\[
 u_\eta:=\E[\min\{1,X_\eta\}].
\]
We saw that $X_\eta>0$ almost surely, hence $u_\eta>0$. The key observation of the proof is the following fact: on the event $\{U_\eta\le1-\alpha r^2\}$, we have
\begin{equation}\label{eq:capped-message}
 X_\eta \ge\min\{1,\alpha V_\eta\}.
\end{equation}
To see this, recall that $X_\eta=\frac{V_\eta}{r^2+U_\eta V_\eta}$. If $V_\eta\le1/\alpha$, the denominator is at most
$r^2+(1-\alpha r^2)/\alpha=1/\alpha$ and thus $X_\eta \ge \alpha V_\eta$. 
If $V_\eta\ge1/\alpha$, then $X_\eta \geq \alpha^{-1}/(r^2+(1-\alpha r^2)\alpha^{-1})=1$, because the function
$v \mapsto v/(r^2+(1-\alpha r^2)v)$ is increasing. 

Write $V_\eta=\eta+\sum_{j=1}^N X^-_{\eta,j}$, where
$N\sim\Pois(d)$ and the $X^-_{\eta,j}$ (who represent the incoming branch messages) are iid copies of $X_\eta$, and are all independent
of $U_\eta$ thanks to Proposition~\ref{prop:recursion}. Then, dropping the $\eta$ in the definition of $V_\eta$, \eqref{eq:capped-mean} gives the recursion
\begin{align*}X_\eta &\ge  \min \left\lbrace 1, \sum_{j=1}^N \alpha X^-_{\eta,j}\right\rbrace. \end{align*}
From there, we have
\begin{align*}
\min\{1,X_\eta\}&\ge  \min \left\lbrace 1, \sum_{j=1}^N \min\{1, \alpha X^-_{\eta,j}\}\right\rbrace.
\end{align*}
We are going to use the following elementary inequality: for any $t_1,\ldots,t_N\in[0,1]$,
\[
 1-\prod_{j=1}^N(1-t_j)
 \le\min\left\{1,\sum_{j=1}^N t_j\right\},
\]
with the convention that an empty sum is 0 and an empty product is 1. 
That the LHS is smaller than 1 is immediate; the bound by the sum follows
can be proved using an induction on $N$ and the inequality $s+t-st\le s+t$. 

We apply this inequality with $t_j=\alpha\min\{1,X^-_{\eta,j}\}\in[0,1]$ on the event $\{U_\eta \le 1-\alpha r^2\}$:
\[
 \min\{1,X_\eta\}\ge
 \mathbf1_{\{U_\eta\le1-\alpha r^2\}}
 \left[1-\prod_{j=1}^N
       \bigl(1-\alpha\min\{1,X^-_{\eta,j}\}\bigr)\right].
\]
We now take expectations, condition on $N$, and use the Poisson generating function $\E[s^N] = e^{d(s-1)}$. Note that $U_\eta$ is entirely independent from $N$ or the $X^-_{\eta,j}$, so we get
\begin{align}
 u_\eta
 &\ge\Prob(U_\eta\le1-\alpha r^2)
       \left(1-\E[(1-\alpha u_\eta)^N]\right)=\Prob(U_\eta\le1-\alpha r^2)
       \bigl(1-e^{-d\alpha u_\eta}\bigr).\label{eq:capped-mean}
\end{align}

We are now equipped to show \eqref{eq:prop-target-0}, by contradiction. Suppose indeed that \eqref{eq:prop-target-0} were false. We could choose
$\eta_k\downarrow0$ with
$\Prob(X_{\eta_k}\ge1/k)<1/k$, and hence
$X_{\eta_k}\to0$ in probability, which would imply that (1) $u_{\eta_k}\to0$ and (2) $U_{\eta_k}\to0$ in probability, as we saw just after \eqref{eq:compound-poisson-tail}. But remember that we chose $\alpha$ so that $1-\alpha r^2>0$, hence 
$$\lim_{k \to \infty}\Prob(U_{\eta_k} \le 1-\alpha r^2) = 1.$$
But then, \eqref{eq:capped-mean} would imply that 
$$1 \geq \lim_{k \to \infty}\frac{1-e^{-d\alpha u_{\eta_k}}}{u_{\eta_k}} = d\alpha >1,$$
This is the sought contradiction.

\section{Convergence of the empirical spectral measure}\label{sec:cv-esd}
Theorem~1.1 of \cite{sah2023limiting} states that the empirical spectral
measure converges in probability to a deterministic measure.  The discussion in \cite{sah2023limiting} following that theorem identifies the limit with the Brown measure of the directed Poisson--Galton--Watson adjacency operator.  This section, which is mostly expository, supplies some details regarding this identification. 

\begin{lemma}[Identification of the limiting measure]
\label{lem:limit-is-brown}
Let $d>1$, let $A_n$ have independent $\Ber(d/n)$ entries, and let $A$ be the closed
directed adjacency operator defined above, regarded as an operator affiliated
with $(\M,\tauop)$.  Then $\mu_A$ is a deterministic probability measure and
\[
  \mu_{A_n}
  \xrightarrow[n\to\infty]{}
  \mu_A
\]
where the convergence is weak in probability, which means that for every $f\in \mathscr{C}_b(\C)$ and every $\varepsilon>0$,
\[
 \Prob\left(
   \left|\int f\,\dd\mu_{A_n}-\int f\,\dd\mu_A\right|>\varepsilon
 \right)
 \longrightarrow0.
\]
Consequently, the deterministic measure $\mu_d$ in
\cite[Theorem~1.1]{sah2023limiting} is $\mu_A$. 
\end{lemma}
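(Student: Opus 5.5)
The identification goes through the Hermitization method of Girko: we show that the log-potentials of $\mu_{A_n}$ converge to $L_A$ pointwise for almost every $z$, in probability, and then invoke the standard lemma (as in Bordenave--Chafaï's survey) that converts this into weak convergence in probability of the spectral measures.

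\textbf{Step 1 (tightness).} The family $\mu_{A_n}$ is tight in probability, since
\[
\int|\lambda|^2\,\dd\mu_{A_n}\le\frac1n\operatorname{tr}(A_nA_n^*),
\]
and the right-hand side converges in probability to $d$. This bound also gives uniform integrability of $\log|\cdot-z|$ at infinity.

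\textbf{Step 2 (regularized potentials).} Fix $z$. Let $\nu_{n,z}$ denote the empirical singular value distribution of $A_n-z$, and let $\nu_z:=\mu_{|A-z|}$. Since the directed Erdős–Rényi digraph converges locally weakly to the directed PGW tree, and these are uniformly bounded-degree-in-mean operators, the Stieltjes transforms of the Hermitizations converge. Concretely, we use the resolvent convergence for local limits of sparse Hermitized matrices in the sense of Bordenave–Lelarge, which is available because the tree operator is essentially self-adjoint; this is where the affiliated closed operator matters. It gives $\nu_{n,z}\to\nu_z$ weakly in probability. Consequently, for each $\eta>0$,
\[
\tfrac1{2n}\log\det\bigl((A_n-z)(A_n-z)^*+\eta^2\bigr)\to L_\eta(z)
\]
in probability. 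Here $\log(t^2+\eta^2)$ is continuous, and it is uniformly integrable by the second-moment bound of Step 1.

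\textbf{Step 3 (removing the regularization).} This is the main obstacle: we need uniform integrability of $\log$ near $0$ for $\nu_{n,z}$. This is exactly what the singular-value estimates of \cite{sah2023limiting} provide. From them we take a polynomial lower bound on the smallest singular value of $A_n-z$ holding with high probability, together with a bound on the intermediate singular values ensuring
\[
\lim_{\eta\downarrow0}\limsup_n\Prob\Bigl(\int_0^\eta|\log t|\,\dd\nu_{n,z}(t)>\varepsilon\Bigr)=0
\]
for almost every $z$. Combined with the monotone convergence $L_\eta(z)\downarrow L(z)$ on the limiting side, this yields
\[
\frac1n\log|\det(A_n-z)|\to L_A(z)
\]
in probability for almost every $z$.

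\textbf{Step 4 (conclusion).} The Hermitization lemma, fed with tightness (Step 1) and pointwise convergence of the potentials (Step 3), shows that $\mu_{A_n}$ converges weakly in probability to the unique probability measure whose log-potential is $L_A$. By the characterization of the Brown measure recalled in Section~\ref{sec:tree-brown-measure}, that measure is $\mu_A$. Since it is deterministic, uniqueness of weak limits identifies it with the $\mu_d$ of \cite[Theorem~1.1]{sah2023limiting}.
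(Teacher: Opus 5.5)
Your proposal is correct in outline and shares the paper's architecture: Hermitization, convergence of the shifted singular laws $\nu_{n,z}$, and lower-tail log-integrability taken from \cite{sah2023limiting}. The genuine difference is how you identify the limit of $\nu_{n,z}$ with $\mu_{|A-z|}$.

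The paper uses neither local weak convergence nor essential self-adjointness. It takes from \cite{sah2023limiting} that $\nu_{n,z}$ converges to some deterministic $\nu_z$ determined by its even moments. It then computes $\E\int t^{2k}\,\dd\nu_{n,z}\to m_{2k}(z)$ by a pattern expansion in which only singly-oriented tree patterns survive, and passes these moments to the weak limit using a uniform bound on the next moment. Finally it recomputes the same $m_{2k}(z)$ on the PGW tree via $\E(N)_r=d^r$ and concludes by moment-determinacy. This is elementary and has no domain issues, since $\delta_o$ lies in the domain of every power of $(A-z)^*(A-z)$.

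Your Bordenave--Lelarge resolvent route is more conceptual and more robust. It needs no moments or determinacy, so it would survive e.g. heavy-tailed weights, and it uses \cite{sah2023limiting} only for the lower tail. However, it rests on facts you assert rather than check:
\begin{itemize}
\item The operator that must be essentially self-adjoint is the Hermitized one on finitely supported vectors of $\ell^2(V\oplus V')$, not $A$ itself. This holds because the doubled graph is again a tree with Poisson-type degrees and bounded weights: its edges are $\{u,v'\}$ of weight $1$ for $u\to v$, plus rungs $\{u,u'\}$ of weight $-z$. So the Bordenave--Lelarge exhaustion criterion applies.
\item You need the closure of that operator to be $\begin{pmatrix}0&A-z\\(A-z)^*&0\end{pmatrix}$ built from the closed affiliated $A$. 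This is true because the latter is self-adjoint and extends the former, and a self-adjoint operator has no proper symmetric extension. Consequently the expected root spectral measures at $o$ and $o'$ are the symmetrizations of $\mu_{|(A-z)^*|}$ and $\mu_{|A-z|}$, which coincide by traciality.
\item You should say that you use local weak convergence in probability (or annealed convergence plus concentration of the ESD) to get convergence in probability rather than in expectation.
\end{itemize}

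Your Steps 3--4 repackage the paper's direct appeal to \cite[Lemma~4.3]{bordenave2012around} in an equivalent way: you remove $\eta$ by hand via $0\le\tfrac12\log(t^2+\eta^2)-\log t$, then apply a potential-form Hermitization lemma with tightness from $\frac1n\operatorname{tr}(A_nA_n^*)$. Only the integrated lower-tail estimate from \cite{sah2023limiting} is needed there, not any specific polynomial bound on the least singular value.
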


\begin{proof}
By \eqref{eq:pgw-L2}, $\tauop(A^*A)=d<\infty$. Thus the construction
of the Brown measure and \cite[Theorem~2.7]{haagerup2007brown} gives a probability measure
$\mu_A=(2\pi)^{-1}\Delta L$, where
\[
 L(z)=\int_0^\infty\log t\,\dd\nu_{A,z}(t),
 \qquad \nu_{A,z}:=\mu_{|A-z|}.
\]
Both are deterministic because $\tauop$ includes an expectation over
the whole distribution of the Poisson-Galton-Watson tree. We now explain the convergence towards $\mu_A$, which has two main steps.

\bigskip

\paragraph{Step 1: identification of the shifted singular laws.}
Fix $z\in\C$ and write
\[
 X_n=A_n-z\Id,
 \qquad \nu_{n,z}=\frac1n\sum_{j=1}^n\delta_{\sigma_j(X_n)}.
\]
By \cite[Lemmas~11.1 and~4.1; see also the proof of
Theorem~1.1, p.~66]{sah2023limiting},
$\nu_{n,z}$ converges weakly in probability to a deterministic
probability measure $\nu_z$ on $[0,\infty)$.
Lemma~4.1 transfers the singular-value convergence from the auxiliary
model in that paper to the iid model: the total-variation error tends
to zero, and a simultaneous permutation of rows and columns preserves
the singular values of the shifted matrix.
The proof of \cite[Lemma~11.1, p.~53]{sah2023limiting} also establishes
that $\nu_z$ is uniquely determined by its even moments among probability
measures on $[0,\infty)$. We will use this uniqueness conclusion after
matching those moments with the moments of $\nu_{A,z}$.
Passing from weak convergence to moments requires care because the
test functions $t^{2k}$ are unbounded; we justify this passage below.

For $k\ge1$, put
\begin{equation}\label{eq:finite-shifted-moments}
 M_{n,k}(z):=\int t^{2k}\,\dd\nu_{n,z}(t)
 =\frac1n\operatorname{Tr}\big[(X_n^*X_n)^k\big].
\end{equation}
Write $a_{uv}=(A_n)_{uv}$. Since these entries are real,
\[
 (X_n^*)_{uv}=a_{vu}-\overline z\,\mathbf1_{\{u=v\}},
 \qquad (X_n)_{uv}=a_{uv}-z\,\mathbf1_{\{u=v\}}.
\]
Expanding the matrix product in \eqref{eq:finite-shifted-moments}
and summing its diagonal entries gives a cyclic sequence of indices
$i_0,i_1,\ldots,i_{2k}=i_0$. The odd steps use $X_n^*$ and the even
steps use $X_n$. Thus
\begin{equation}\label{eq:shifted-index-expansion}
\begin{aligned}
 M_{n,k}(z)
 ={}&\frac1n\sum_{i_0,\ldots,i_{2k-1}\in[n]}
 \prod_{\ell=1}^k
 \Big[\big(a_{i_{2\ell-1},i_{2\ell-2}}
       -\overline z\,\mathbf1_{\{i_{2\ell-2}=i_{2\ell-1}\}}\big)\big(a_{i_{2\ell-1},i_{2\ell}}
       -z\,\mathbf1_{\{i_{2\ell-1}=i_{2\ell}\}}\big)\Big].
\end{aligned}
\end{equation}
At each position, select either the random entry or the scalar term.
A scalar choice forces consecutive indices to agree and contributes
$-\overline z$ at odd positions and $-z$ at even positions.
A random choice uses the directed edge $(i_j,i_{j-1})$ at an odd
position $j$ and $(i_{j-1},i_j)$ at an even position.
Group these terms by their choices and by the equality relations
among the indices, relabeling vertices in order of first appearance.
Call each resulting pattern $\pi$, and write $v(\pi)$ for its number
of vertices, $e(\pi)$ for its number of distinct ordered random edges (including self-loops),
and $c_\pi(z)$ for the product of its scalar factors, with the empty
product equal to $1$. Repeated uses of the same ordered entry count
only once in $e(\pi)$; the entries $a_{uv}$ and $a_{vu}$ are distinct
when $u\ne v$.

There are $(n)_{v(\pi)}$ injective labelings of the pattern, where
$(n)_v=n(n-1)\cdots(n-v+1)$; this number is zero if $v>n$.
For any such labeling, the expectation of the product of its random
entries is $(d/n)^{e(\pi)}$: indeed, $a_{uv}^r=a_{uv}$ for $r\ge1$,
and the distinct ordered entries are independent with mean $d/n$.
Multiplying by the scalar factors, the number of labelings, and the
trace normalization $1/n$, we find that the pattern contributes
\[
 c_\pi(z)\,\frac{(n)_{v(\pi)}}n
             \left(\frac dn\right)^{e(\pi)}.
\]
There are finitely many patterns for fixed $k$: there are only
$2^{2k}$ choices of random or scalar terms, and finitely many equality
patterns among the $2k$ index positions. We can therefore take the
limit separately for each pattern.
Every new vertex is reached by a random step, since a scalar step
stays at the same vertex. Hence the random edges connect all the
visited vertices, and $e(\pi)\ge v(\pi)-1$.
Equality holds precisely when the underlying graph is a tree and each
edge has just one orientation. A loop or a second orientation adds an
ordered edge without helping to connect new vertices, while an
undirected cycle already requires at least as many edges as vertices.
For each fixed pattern,
\[
 \frac{(n)_{v(\pi)}}n\left(\frac dn\right)^{e(\pi)}
 =d^{e(\pi)}\frac{(n)_{v(\pi)}}{n^{v(\pi)}}
          n^{v(\pi)-1-e(\pi)},
 \qquad
 \frac{(n)_{v(\pi)}}{n^{v(\pi)}}\longrightarrow1.
\]
Its contribution therefore vanishes if $e(\pi)>v(\pi)-1$ and
converges to $c_\pi(z)d^{e(\pi)}$ otherwise. Consequently,
\begin{equation}\label{eq:shifted-tree-pattern-sum}
 \E M_{n,k}(z)\longrightarrow
 m_{2k}(z):=\sum_{\pi\text{ a tree pattern}}
                 c_\pi(z)d^{e(\pi)}.
\end{equation}
The all-scalar pattern is included as the one-vertex tree with no edges. The scalar choices do not contribute to $e(\pi)$.

We next justify passing these expected moments to the weak limit
$\nu_z$. First, the same finite pattern expansion gives a uniform
bound at every fixed order. Indeed, for $n\ge v(\pi)$,
\[
 |c_\pi(z)|\,\frac{(n)_{v(\pi)}}n
       \left(\frac dn\right)^{e(\pi)}
 \le |c_\pi(z)|d^{e(\pi)}n^{v(\pi)-1-e(\pi)}
 \le |c_\pi(z)|d^{e(\pi)},
\]
and the contribution is zero for $n<v(\pi)$. Summing these bounds
over the finitely many patterns at order $k+1$ gives
\begin{equation}\label{eq:shifted-higher-moment-bound}
 \sup_n\E M_{n,k+1}(z)<\infty.
\end{equation}

Observe that, on $t>R>0$,
$t^{2k}=t^{2k+2}/t^2\le R^{-2}t^{2k+2}$. Hence
\begin{equation}\label{eq:shifted-moment-tail}
\begin{aligned}
 \sup_n\E\int_{t>R}t^{2k}\,\dd\nu_{n,z}(t)
 &\le \frac1{R^2}\sup_n\E\int_{t>R}t^{2k+2}\,\dd\nu_{n,z}(t)\\
 &\le \frac1{R^2}\sup_n\E M_{n,k+1}(z)
   \xrightarrow[R\to\infty]{}0.
\end{aligned}
\end{equation}
The last limit follows from \eqref{eq:shifted-higher-moment-bound};
in particular, the tail is small uniformly in $n$.
For fixed $R$, the function $\min\{t^{2k},R^{2k}\}$ is bounded and
continuous. Moreover,
\[
 0\le t^{2k}-\min\{t^{2k},R^{2k}\}
 \le t^{2k}\mathbf1_{\{t>R\}}.
\]
Integrating and using \eqref{eq:shifted-moment-tail} gives
\[
 0\le \E M_{n,k}(z)
     -\E\int\min\{t^{2k},R^{2k}\}\,\dd\nu_{n,z}(t)
 \le\frac1{R^2}\sup_n\E M_{n,k+1}(z).
\]
Now take $n\to\infty$, using \eqref{eq:shifted-tree-pattern-sum}
for the first term and bounded continuous convergence for the second:
\[
 0\le m_{2k}(z)
     -\int\min\{t^{2k},R^{2k}\}\,\dd\nu_z(t)
 \le\frac1{R^2}\sup_n\E M_{n,k+1}(z).
\]
Finally, as $R\to\infty$, the truncated integrands increase to
$t^{2k}$ and the right-hand side tends to zero. Monotone convergence
therefore proves that
\begin{equation}\label{eq:shifted-limit-moment}
 \int t^{2k}\,\dd\nu_z(t)=m_{2k}(z)<\infty.
\end{equation}

On the tree, the root-trace formula \eqref{eq:network-trace} gives
\begin{equation}\label{eq:tree-shifted-moment}
 \int t^{2k}\,\dd\nu_{A,z}(t)
 =\E\big\langle |A_T-z\Id|^{2k}\delta_o,\delta_o\big\rangle.
\end{equation}
We can thus expand it as in
\eqref{eq:shifted-index-expansion}, now fixing $i_0=i_{2k}=o$,
omitting $1/n$, and using $a_{uv}=\mathbf1_{\{u\to v\}}$.
Only tree patterns occur, since every connected subgraph of the
directed PGW tree has a tree as its underlying graph, with no loops
or pairs of opposite edges. The scalar choices have the same weights
as in the finite-matrix expansion.

To count a pattern's rooted embeddings, root it at its initial vertex.
If a pattern vertex has $r^+$ outgoing and $r^-$ incoming children,
excluding its parent, their distinct labels require ordered choices
of new neighbors. For independent $N^+,N^-\sim\Pois(d)$, there are
$(N^+)_{r^+}(N^-)_{r^-}$ such choices. They are ordered because each
child occupies a specified position in the index pattern.
For $N\sim\Pois(d)$, the factorial-moment identity gives
\[
 \E(N)_r=\sum_{m=r}^\infty
          \frac{m!}{(m-r)!}e^{-d}\frac{d^m}{m!}
 =d^r e^{-d}\sum_{j=0}^\infty\frac{d^j}{j!}=d^r,
 \qquad r\ge0.
\]
For $r=0$, the falling factorial is $1$, also when $N=0$.
Independence of the incoming and outgoing offspring counts therefore
gives $\E[(N^+)_{r^+}(N^-)_{r^-}]=d^{r^++r^-}$.
Conditional on the selected children, their descendant trees are
independent copies of the original rooted tree. Repeating the count
at each child, and then down the finite pattern, multiplies one factor
$d$ for each edge. 
It follows that the expected number of rooted embeddings is
$d^{e(\pi)}$. 

There are only finitely many patterns.
Consequently, we have
\[
 \int t^{2k}\,\dd\nu_{A,z}(t)=m_{2k}(z)
 =\int t^{2k}\,\dd\nu_z(t),\qquad k\ge1.
\]

We can now apply the uniqueness conclusion from the proof of
\cite[Lemma~11.1, p.~53]{sah2023limiting}. To explain precisely why
it applies, that proof establishes uniqueness of the symmetrized
probability measure on $\mathbb R$ with the prescribed even moments
and zero odd moments. Symmetrizing a measure on $[0,\infty)$ means
averaging its images under $t\mapsto t$ and $t\mapsto-t$.
The symmetrizations of $\nu_{A,z}$ and $\nu_z$ therefore have the
same moments of every order, and the cited uniqueness result makes
them equal. 

Applying $t\mapsto|t|$ recovers the original measures,
so $\nu_{A,z}=\nu_z$. Substituting this identity into the known
weak convergence in probability proves
\begin{equation}\label{eq:shifted-singular-convergence}
 \nu_{n,z}\xrightarrow[n\to\infty]{\text{weakly in probability}}\nu_{A,z}
 \qquad\text{for every fixed }z\in\C.
\end{equation}

\bigskip

\paragraph{Step 2: logarithmic potentials and Hermitization.}
To pass from singular values to eigenvalues, we must classically control the behaviour of the logarithm near 0 and $\infty$. The most difficult estimate is for the lower-tail: 
\begin{equation}\label{eq:small-log-external-input}
 \lim_{R\to\infty}\limsup_{n\to\infty}
 \Prob\!\left(
   \int_{[0,e^{-R})}|\log (t)|\,\dd\nu_{n,z}(t)>\varepsilon
 \right)=0,
\end{equation}
for Lebesgue-almost every $z$ and every $\varepsilon>0$, and using the convention $|\log (0)|=\infty$.
This estimate was proved in \cite[Section~15]{sah2023limiting}. The upper tail is easier, because it follows directly from the second moment:
\[
 \E\int t^2\,\dd\nu_{n,z}(t)
 =\frac1n\E\|A_n-z\Id\|_{\mathrm{HS}}^2
 \le2d+2|z|^2.
\]
For $t>e^R$, $R>0$, we have $\log t\le e^{-R}t^2$. Thus
\[
 \E\int_{t>e^R}\log (t)\,\dd\nu_{n,z}(t)
 \le2e^{-R}(d+|z|^2).
\]
Together with Markov's inequality and
\eqref{eq:small-log-external-input}, this gives
\begin{equation}\label{eq:log-uniform-integrability}
 \lim_{R\to\infty}\limsup_{n\to\infty}
 \Prob\!\left(
  \int_{\{|\log (t)|>R\}}|\log (t)|\,\dd\nu_{n,z}(t)>\varepsilon
 \right)=0.
\end{equation}

We can now apply the Hermitization lemma
\cite[Lemma~4.3 and Remark~4.4]{bordenave2012around}.
Its hypotheses are precisely the deterministic singular-law limit
\eqref{eq:shifted-singular-convergence} and logarithmic uniform
integrability \eqref{eq:log-uniform-integrability}.
It yields weak convergence in probability of $\mu_{A_n}$ to a
probability measure $\mu$ satisfying
\[
 \int_{\C}\log|z-w|\,\dd\mu(w)
 =\int_0^\infty\log t\,\dd\nu_{A,z}(t)=L(z)
 \qquad\text{for Lebesgue-almost every }z.
\]
Consequently $\mu=(2\pi)^{-1}\Delta L=\mu_A$, as desired.
\end{proof}

\newpage

{\footnotesize

\bibliographystyle{plain}
\bibliography{refs}

}

\hrulefill

\end{document}